\theoremstyle{definition}
\theoremstyle{remark}
\numberwithin{equation}{section}
\newtheorem{theorem}{Theorem}[section]
\newtheorem*{theorem*}{Theorem}
\newtheorem*{lemma*}{Lemma}
\newtheorem{corollary}{Corollary}[section]
\newtheorem{remark}{Remark}[section]
\newtheorem{proposition}{Proposition}[section]
\newtheorem{definition}{Definition}[section]
\begin{document}
\title[Smale Conjecture and Minimal Legendrian Graph ]{The Smale Conjecture
and Minimal Legendrian Graph in $\mathbb{S}^{2}\times \mathbb{S}^{3}$}
\author{Shu-Cheng Chang$^{1\ast \ast }$}
\address{$^{1}$Mathematical Science Research Center, Chongqing University of
Technology, 400054, Chongqing, P.R. China }
\email{scchang@math.ntu.edu.tw }
\author{Chin-Tung Wu$^{2\ast }$}
\address{$^{2}$Department of Applied Mathematics, National Pingtung
University, Pingtung 90003, Taiwan}
\email{ctwu@mail.nptu.edu.tw }
\author{Liuyang Zhang$^{3\ast \ast \ast }$}
\address{$^{3}$Mathematical Science Research Center, Chongqing University of
Technology, 400054, Chongqing, P.R. China}
\email{zhangliuyang@cqut.edu.cn}
\thanks{$^{\ast }$Research supported in part by NSC, Taiwan. $^{\ast \ast }$%
Research supported in part by Funds of the Mathematical Science Research
Center of Chongqing University of Technology (No. 0625199005). $^{\ast \ast
\ast }$ Research supported in part by the Scientific and Technological
Research Program of Chongqing Municipal Education Commission (No.
KJQN202201138); Research Foundation for Advanced Talents of Chongqing
University of Technology\ (No. 2022ZDZ019); Training Program of the National
Natural Science Foundation of China (No. 2022PYZ033 )}
\subjclass{Primary 53C44, Secondary 53C56.}
\keywords{Contactomorphism, Sasaki-Einstein metric, Legendrian mean
curvature flow, Minimal Legendrian graph, Smale conjecture, Blow-up,
Monotonicity formula}
\maketitle

\begin{abstract}
In this article, by using the monotonicity formula and blow-up analysis, we
deform the area-preserving contactomorphism (symplectomorphism) of Sasakian $%
3$-spheres to an isometry via the Legendrian mean curvature flow for the
Legendrian graph in $\mathbb{S}^{2}\times \mathbb{S}^{3}$. As consequences,
we obtain the minimal Legendrian graph and then recapture the Smale
conjecture on a Sasakian $3$-sphere.
\end{abstract}

\section{Introduction}

In 2003, K. Smoczyk \cite{s1} study a kind of mean curvature flow for
Legendrian submanifold into a Sasakian pseudo-Einstein manifold and then he
proved that closed Legendrian curves in a Sasakian space form converge to
closed Legendrian geodesics. Recently, the first two authors (\cite{chw})
proved the existence of the long-time solution and asymptotic convergence
along the Legendrian mean curvature flow in higher dimensional $\eta $%
-Einstein Sasakian $(2n+1)$-manifolds under the suitable stability condition
due to the Thomas-Yau conjecture. In this paper, we will follow the same
notations as in \cite{s1} and \cite{chw} as following.

The Smale conjecture (\cite{sm1}) states\ that the diffeomorphism group $%
\mathrm{Diff}(\mathbb{S}^{3})$ of the $3$-sphere has the homotopy-type of
its isometry group $O(4).$ The conjecture was proved by Hatcher \cite{hat}.
For $n\geq 5$, it is false in general for $n\geq 5$ due to the failure of $%
\mathrm{Diff}(\mathbb{S}^{n})/O(n+1)$ to be contractible.

In this paper, we will focus on the Smale conjecture for the area-preserving
contactomorphism or symplectomorphism on the regular Sasakian $3$-sphere.

Let $(M^{2n+1},\xi ,\eta ,\mathcal{T}\mathbf{,}g)$ be a regular Sasakian
manifold with the endowed with a contact one-form $\eta $ such that 
\begin{equation}
(d\eta )^{n}\wedge \eta >0  \label{V}
\end{equation}%
defines a volume form on $M$ which defines a natural orientation. The Reeb
vector field $\mathcal{T}$ is defined by $\eta (\mathcal{T})=1;$ $d\eta (%
\mathcal{T},\cdot )=0.$ The contact form $\eta $ induces the $2n$%
-dimensional contact distribution or contact subbundle $\xi $ over $M$,
which is given by 
\begin{equation*}
\xi _{p}=\ker \eta _{p},
\end{equation*}%
where $\xi _{p}$ is the fiber of $\xi $ at each point $p\in M.$ Moreover 
\begin{equation*}
\omega ^{T}:=d\eta
\end{equation*}%
defines a symplectic subbundle $(\xi ,\omega ^{T}|_{\xi \oplus \xi })$. We
consider the area-preserving contactomorphism (symplectomorphism) 
\begin{equation*}
\varphi :(M^{2n+1},\xi ,d\eta _{1},\mathcal{T}_{1}\mathbf{,}%
g_{1})\rightarrow (M^{2n+1},\xi ,d\eta _{2},\mathcal{T}_{2}\mathbf{,}g_{2})
\end{equation*}%
between two symplectic subbundles $(\xi ,d\eta _{1})$ and $(\xi ,d\eta _{2})$
such that%
\begin{equation*}
\varphi ^{\ast }(d\eta _{2})=d\eta _{1}.
\end{equation*}%
In this case the contactomorphism preserves the symplectic form $\varphi
^{\ast }\eta _{2}=\eta _{1}$ and then preserves the volume form (\ref{V}) as
well.\newline
We assume that $(M^{2n+1},\xi ,\eta _{i},\mathcal{T}_{i})$ is regular and $%
Z^{n}$ denotes the space of leaves of the characteristic foliation $\mathcal{%
F}$ which carries the structure of a K\"{a}hler manifold with a K\"{a}hler
form $\omega _{i}$ such that $\pi _{i}:(M,d\eta _{i})\rightarrow (Z,\omega
_{i})$ is an Riemannian submersion, and a principal $\mathbb{S}^{1}$-bundle
over $Z_{i}.$ Furthermore, the fibers of $\pi _{i}$ are geodesics and it
satisfies $d\eta _{i}=\pi _{i}^{\ast }(\omega _{i}).$ Now we associate $%
f:(Z,\omega _{1})\rightarrow (M,d\eta _{2})$ so that $\varphi =f\circ \pi
_{1}$: 
\begin{equation*}
\begin{array}{ccl}
(M,\mathcal{T}_{1},\eta _{1}) & \overset{\varphi }{\rightarrow } & (M,%
\mathcal{T}_{2},\eta _{2}) \\ 
\downarrow \pi_{1} & \circlearrowleft f\nearrow \curvearrowright & 
\downarrow \pi_{2} \\ 
(Z,\omega _{1}) & \overset{\widetilde{f}}{\longrightarrow } & (Z,\omega
_{2}).%
\end{array}%
\end{equation*}%
We denote\ $f^{\ast }(\eta _{2}):=\eta _{0}.$ Since $\pi _{1}^{\ast }(\omega
_{1})=d\eta _{1}$ and $\pi _{1}^{\ast }(\omega _{1})=d\eta _{1}=\varphi
^{\ast }(d\eta _{2})=\pi _{1}^{\ast }(f^{\ast }(d\eta _{2}))=\pi _{1}^{\ast
}(d\eta _{0}).$ Thus%
\begin{equation*}
f:(Z,\omega _{1})\rightarrow (M,\mathcal{T}_{2},\eta _{2})
\end{equation*}%
is a area-preserving map with 
\begin{equation*}
\omega _{1}=d\eta _{0}=f^{\ast }(d\eta _{2}).
\end{equation*}%
Note that one can associate $\widetilde{f}:(Z,\omega _{1})\rightarrow
(Z,\omega _{2})$ such that $\widetilde{f}=\pi _{2}\circ f.$ Again 
\begin{equation*}
\widetilde{f}:(Z,\omega _{1})\rightarrow (Z,\omega _{2})
\end{equation*}%
is a area-preserving symplectomorphism with 
\begin{equation*}
(\widetilde{f})^{\ast }(\omega _{2})=\omega _{1}.
\end{equation*}%
In particular, for $M=\mathbb{S}^{2n+1}$ and $Z=\mathbb{CP}^{n},$ we
consider a principal $\mathbb{S}^{1}$-bundle $\mathbb{S}^{2n+1}$ over $%
\mathbb{CP}^{n}$ and 
\begin{equation*}
\begin{array}{c}
\varphi :(\mathbb{S}^{2n+1},\mathcal{T}_{0},d\eta _{1},g_{1})\rightarrow (%
\mathbb{S}^{2n+1},\mathcal{T}_{0},d\eta _{2},g_{2}).%
\end{array}%
\end{equation*}%
Let $d\eta _{2}$ be the transverse K\"{a}hler metric on $(\mathbb{S}^{2n+1},%
\mathcal{T}_{0},d\eta _{2},g_{2})$. Suppose that $(\mathbb{CP}^{n},d\eta
_{0})$ has constant $\phi $-curvature $c=+1$ and $(\mathbb{S}^{2n+1},%
\mathcal{T}_{0},d\eta _{2},g_{2})$ also has the same constant $\phi $%
-curvature $c=+1.$ Then $(\mathbb{CP}^{n}\times \mathbb{S}^{2n+1},d\eta
_{0}\oplus -d\eta _{2},\overline{\eta },\overline{\mathcal{T}},\overline{g})$
is a Sasaki-Einstein manifold with $\overline{\eta }=\eta _{0}-\eta _{2}$.
Define the Legendrian graph%
\begin{equation*}
F:\Sigma ^{n}\rightarrow (\mathbb{CP}^{n}\times \mathbb{S}^{2n+1},d\eta
_{0}\oplus -d\eta _{2},\overline{\eta },\overline{\mathcal{T}},\overline{g})
\end{equation*}%
by%
\begin{equation*}
\Sigma ^{n}=\{(x,f(x));\text{\ }x\in \mathbb{CP}^{n}\}\subset \mathbb{CP}%
^{n}\times \mathbb{S}^{2n+1}.
\end{equation*}

Next we want to deform the contactomorphism 
\begin{equation*}
\begin{array}{c}
\varphi :(\mathbb{S}^{2n+1},\mathcal{T}_{0},d\eta _{1},g_{1})\rightarrow (%
\mathbb{S}^{2n+1},\mathcal{T}_{0},d\eta _{2},g_{2})%
\end{array}%
\end{equation*}%
to be an isometry via the so-called Legendrian mean curvature flow. Let us
recall some notions from \cite{s1} and \cite{chw}.

\begin{definition}
A submanifold $F:\Sigma ^{n}\rightarrow M^{2n+1}$ of a Sasakian manifold $%
(M,\eta ,\mathcal{T},g,\Phi )$ is called isotropic if it is tangent to $\xi
, $ $i.e.$ $\eta |_{TL}=0$ or $F^{\ast }\eta =0.$ In particular $F^{\ast
}d\eta =0$ as well. A Legendrian submanifold $\Sigma $ is a maximally
isotropic submanifold of dimension $n.$ Then 
\begin{equation*}
TM^{2n+1}=T\Sigma \oplus N\Sigma =T\Sigma ^{n}\oplus \Phi T\Sigma ^{n}\oplus 
\mathbb{R}\mathcal{T}.
\end{equation*}
\end{definition}

\begin{definition}
A Sasakian manifold $(M,\eta ,\mathcal{T},g,\Phi )$ is called $\eta $%
-Einstein if there is a constant $K$ such that the Ricci curvature 
\begin{equation*}
Ric=Kg+(2n-K)\eta \otimes \eta
\end{equation*}%
and then 
\begin{equation*}
Ric^{T}=(K+2)g^{T}
\end{equation*}%
which is called the transverse K\"{a}hler-Einstein. It is called
Sasaki-Einstein if $K=2n.$ Then 
\begin{equation*}
Ric=2ng.
\end{equation*}
\end{definition}

\begin{definition}
\label{D11}Let $F_{0}:\Sigma _{0}\rightarrow (M,\eta ,\mathcal{T},g,\Phi )$
be a $n$-dimensional Legendrian submanifold in an $\eta $-Einstein Sasakian $%
(2n+1)$-manifold with the exact initial $H_{0}=-\nabla ^{k}\alpha _{0}v_{k}$%
. Then the Legendrian mean curvature flow is the solution of 
\begin{equation}
\begin{array}{c}
\frac{d}{dt}F_{t}=H-2\alpha \mathcal{T}\mathbf{,}%
\end{array}
\label{c}
\end{equation}%
with the Legendrian mean curvature vector 
\begin{equation}
\begin{array}{c}
H=-\nabla ^{k}\alpha v_{k}%
\end{array}
\label{c-1}
\end{equation}%
and 
\begin{equation*}
\begin{array}{c}
v_{k}=\Phi F_{k}=\Phi _{\alpha }^{\beta }F_{k}^{\alpha }\frac{\partial }{%
\partial y^{\beta }}=v_{k}^{\beta }\frac{\partial }{\partial y^{\beta }}.%
\end{array}%
\end{equation*}%
Here $\alpha $ is called the Legendrian angle which is the negative of the
Lagrangian angle in our notion (\ref{c-1}).
\end{definition}

Note that the flow (\ref{c}) preserves the Legendrian condition, if $(M,\eta
,\mathcal{T},g,\Phi )$ is Sasakian $\eta $-Einstein. Furthermore, under the
Legendrian mean curvature flow (\ref{c}), we have 
\begin{equation}
\begin{array}{c}
\frac{\partial }{\partial t}\alpha =\Delta \alpha +(K+2)\alpha .%
\end{array}
\label{d}
\end{equation}

In our previous paper (\cite{chw}), we proved the long-time solution and
asymptotic convergence along the Legendrian mean curvature flow (\ref{c})
under the small $L^{2}$-norm of initial mean curvature vector on Sasakian $%
\eta $-Einstein manifolds.

In this article, we will focus on the existence problem of minimal
Legendrian graph on a Sasaki-Einstein manifold $\mathbb{S}^{2}\times \mathbb{%
S}^{3}$ through deformation along the Legendrian mean curvature flow.

\begin{theorem}
\label{T11}Assume that $(\mathbb{S}^{3},\mathcal{T}_{0},d\eta _{1},g_{1})$
and $(\mathbb{S}^{3},\mathcal{T}_{0},d\eta _{2},g_{2})$ are two regular
Sasakian spheres of the same constant $\phi $-curvature $c=+1$ and $\pi
_{1}, $ $\pi _{2}$ be $\mathbb{S}^{1}$-principal Hopf fibrations. Let\ $%
\Sigma =\{(x,f(x));\ x\in \mathbb{S}^{2}\}\subset \mathbb{S}^{2}\times 
\mathbb{S}^{3}$ be the Legendrian graph of a area-preserving
contactomorphism $\varphi :(\mathbb{S}^{3},d\eta _{1})\rightarrow (\mathbb{S}%
^{3},d\eta _{2})$ such that the associated symplectic map $f:(\mathbb{S}%
^{2},d\eta _{0})\rightarrow (\mathbb{S}^{3},d\eta _{2})$ so that $\varphi
=f\circ \pi _{1}$. Under the Legendrian mean curvature flow $F:\Sigma \times
\lbrack 0,T)\rightarrow \mathbb{S}^{2}\times \mathbb{S}^{3}$%
\begin{equation*}
\begin{array}{c}
\frac{d}{dt}F=H-2\alpha \mathcal{T}%
\end{array}%
\end{equation*}%
with the initial Legendrian $\Sigma _{0}$. Then $\Sigma _{t}$ remains a
Legendrian graph along the Legendrian mean curvature flow and the flow $%
F_{t} $ admits a smooth solution for all $t>0$, smoothly converging to a
minimal Legendrian graph and the associated symplectic map $f_{0}$ is
Legendrian isotopic to a symplectominimal map $f_{\infty }.$
\end{theorem}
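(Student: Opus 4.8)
The plan is to establish long-time existence and convergence of the Legendrian mean curvature flow by exploiting the structure of the ambient Sasaki-Einstein manifold $\mathbb{S}^2\times\mathbb{S}^3$, where the base Kähler manifold is the round $\mathbb{S}^2$ of positive curvature. First I would verify that the Legendrian graph condition is preserved: since the flow (\ref{c}) preserves the Legendrian condition on an $\eta$-Einstein Sasakian manifold and the initial submanifold $\Sigma_0$ projects diffeomorphically onto $\mathbb{S}^2$ via $\pi_1$, I would use a comparison/maximum principle argument on the Jacobian of the projection $\pi_1\circ F_t$ to show it stays nondegenerate, so $\Sigma_t$ remains a graph for as long as the flow exists. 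Here the key input is the positivity of the transverse holomorphic bisectional curvature coming from $c=+1$, which keeps the graphical quantity (the angle between $T\Sigma_t$ and the horizontal distribution) controlled.

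The heart of the argument is the evolution equation (\ref{d}) for the Legendrian angle $\alpha$, namely $\partial_t\alpha=\Delta\alpha+(K+2)\alpha$, together with the Sasaki-Einstein normalization. The plan is to use this linear-type parabolic equation to derive a gradient estimate and an oscillation decay for $\alpha$. Next I would set up the monotonicity formula for the Legendrian mean curvature flow (the Legendrian analogue of Huisken's monotonicity), which controls a weighted Gaussian-type density and rules out type-I singularities by forcing the density ratio to stay close to one. Combined with a White-type regularity theorem, small density at all scales gives uniform curvature bounds on any finite time interval, hence the flow is smooth up to any finite $T$ and in fact exists for all $t>0$.

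The decisive step—and the one I expect to be the main obstacle—is the blow-up analysis at a putative finite-time singularity. The plan is to perform a parabolic rescaling at the singular point and extract a limit that is a self-shrinking Legendrian (a Legendrian self-shrinker in the tangent cone model). Using the monotonicity formula, the structure of the base $\mathbb{S}^2$, and the graphical a priori bounds from the first two paragraphs, I would argue that the only possible blow-up limit is a flat Legendrian plane, contradicting the assumption that the density exceeds one at a genuine singularity; this is where the positivity of curvature and the low dimension $n=1$ (so that $\Sigma$ is a surface over $\mathbb{S}^2$) must be used essentially to exclude nontrivial self-shrinkers. The difficulty is that Legendrian self-shrinkers need not be as rigid as Lagrangian ones, so one must carefully transfer the problem to the Lagrangian setting on the Kähler base $\mathbb{S}^2$ via the Riemannian submersion $\pi_i$ and invoke the classification of compact Lagrangian self-shrinkers there.

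Having ruled out finite-time singularities, the plan is to upgrade long-time existence to convergence. Using the all-time uniform curvature and angle bounds together with the smoothing estimates, I would show $\alpha\to$ constant as $t\to\infty$; equation (\ref{d}) with the positive eigenvalue shift forces $\alpha$ to decay to zero in the mean and, via interpolation, in every $C^k$ norm along a subsequence. The limit $\Sigma_\infty$ then satisfies $H=-\nabla^k\alpha\,v_k=0$, i.e.\ it is a minimal Legendrian graph, and the associated symplectic map $f_\infty$ is the minimal (harmonic) representative. Finally, because the flow $F_t$ is a smooth isotopy through Legendrian graphs, the maps $f_0$ and $f_\infty$ are Legendrian isotopic, completing the statement; the isotopy invariance follows immediately from the smoothness of the one-parameter family, so no additional obstruction arises at this last stage.
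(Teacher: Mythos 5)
Your skeleton is close to the paper's: preservation of the graph condition via a maximum principle for the graphical quantity $\Omega _{t}=\ast _{t}F_{t}^{\ast }\overline{\omega }$, a Huisken-type monotonicity formula after isometrically embedding $M$ into $\mathbb{R}^{5+m}$, parabolic dilation plus White's regularity theorem to rule out finite-time singularities, and then convergence to a minimal Legendrian graph. However, two of your decisive steps fail as stated. First, your claim that the angle equation (\ref{d}), $\partial _{t}\alpha =\Delta \alpha +(K+2)\alpha $, ``forces $\alpha $ to decay to zero in the mean'' is backwards: the zeroth-order coefficient $K+2>0$ drives exponential growth, not decay, so no oscillation or decay estimate for $\alpha $ can come from this equation alone. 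The paper never argues this way. Instead it first proves $\Omega _{t}\rightarrow 1$ by ODE comparison of (\ref{5}) with $\varphi ^{\prime }=c\varphi (1-\varphi ^{2})$; then, once $\Omega _{t}>1-\epsilon $, it gets a \emph{uniform} bound on $|A|^{2}$ by applying the maximum principle to $\psi =|A|^{2}/\Omega _{t}^{6}$ (using $|\nabla \Omega _{t}|^{2}\leq \frac{4}{3}(1-\Omega _{t}^{2})|A|^{2}$ and $|H|^{2}\leq \frac{4}{3}|A|^{2}$); it integrates $(\partial _{t}-\Delta )\Omega _{t}\geq \frac{2}{3}|A|^{2}\Omega _{t}$ to obtain $\int_{0}^{\infty }\int_{\Sigma _{t}}|A|^{2}d\mu _{t}dt<\infty $, deduces $\sup_{\Sigma _{t}}|A|^{2}\rightarrow 0$ from Ilmanen's small-$\epsilon $ regularity, and only then controls $\alpha $ through $|\nabla \alpha |=|H|$ together with a uniform diameter bound; full (not merely subsequential) smooth convergence comes from Simon's {\L}ojasiewicz-type theorem, using that the flow is a negative gradient flow. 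Your proposal omits this entire quantitative package, and subsequential convergence alone would not give the theorem's ``smoothly converging'' conclusion.

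Second, your blow-up step invokes ``the classification of compact Lagrangian self-shrinkers'' on the K\"{a}hler base; this is both unavailable in the form you need and beside the point, since blow-up limits of a parabolic dilation are complete and non-compact (here, entire graphs), so a compact classification says nothing about them. The paper's mechanism is more elementary: because $\Omega _{t}>\delta $ is scale-invariant, it couples the monotonicity formula (\ref{2023}) with the weighted quantity $\int_{\Sigma _{t}}(1-\Omega _{t})\rho _{y_{0},t_{0}}d\mu _{t}$ to obtain $\frac{d}{dt}\int_{\Sigma _{t}}(1-\Omega _{t})\rho _{y_{0},t_{0}}d\mu _{t}\leq C-C(\delta )\int_{\Sigma _{t}}|A|^{2}\rho _{y_{0},t_{0}}d\mu _{t}$, so along a suitably chosen dilated subsequence $\int |A|^{2}\rho _{0,0}\rightarrow 0$; the limit is then the graph of a linear function, its Gaussian density equals $1$, and White's local regularity theorem shows the putative singular point was regular --- no self-shrinker classification is ever used. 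Be aware also that the smooth extraction of the dilation limit uses the type I curvature bound (as in the paper's own Proposition \ref{Prop1}), so your sketch, like the paper's, only explicitly excludes type I singularities; if you follow this route you should either address type II directly or state that limitation.
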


Then we recapture the Smale conjecture (\cite{hat}) for the area-preserving
contactomorphism 
\begin{equation*}
\varphi :(\mathbb{S}^{3},\mathcal{T}_{0},d\eta _{1},g_{1})\rightarrow (%
\mathbb{S}^{3},\mathcal{T}_{0},d\eta _{2},g_{2})
\end{equation*}%
of the regular Sasakian $3$-sphere as following :

\begin{corollary}
The area-preserving contactomorphism $\varphi $ can be deformed through the
Legendrian mean curvature flow to an isometry $\varphi _{\infty \text{ }}$.
\end{corollary}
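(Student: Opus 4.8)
The strategy is to run the flow of Theorem~\ref{T11}, identify the limiting Legendrian angle, and convert its vanishing into the pointwise isometry condition for the associated base map. By Theorem~\ref{T11} the flow (\ref{c}) converges smoothly to a minimal Legendrian graph $\Sigma_\infty$ of a map $f_\infty$, so $H_\infty=-\nabla^k\alpha_\infty v_k=0$ forces $\alpha_\infty$ to be constant on the connected surface $\Sigma_\infty$. Letting $t\to\infty$ in the angle equation (\ref{d}), which on the Sasaki--Einstein $\mathbb{S}^2\times\mathbb{S}^3$ reads $\partial_t\alpha=\Delta\alpha+(K+2)\alpha$ with $K+2=6$, and using $\partial_t\alpha\to 0$ together with $\Delta\alpha_\infty=0$, yields $6\alpha_\infty=0$, hence $\alpha_\infty\equiv 0$. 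Geometrically, a constant nonzero angle would make the right-hand side of (\ref{c}) a nontrivial Reeb translation $-2\alpha_\infty\mathcal{T}$, incompatible with convergence.

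Next I would pass to the Hopf bases. The induced symplectomorphism $\widetilde f_\infty=\pi_2\circ f_\infty:(\mathbb{S}^2,\omega_1)\to(\mathbb{S}^2,\omega_2)$ is area-preserving, so at each point the singular values $\lambda_1,\lambda_2>0$ of $d\widetilde f_\infty$ satisfy $\lambda_1\lambda_2=1$. In an oriented singular-value frame the induced metric on the graph is $\operatorname{diag}(1+\lambda_1^2,\,1+\lambda_2^2)$, whence the area element is $\sqrt{(1+\lambda_1^2)(1+\lambda_2^2)}\,d\mu_0$; writing $\lambda_1=e^{s}$, $\lambda_2=e^{-s}$ this equals $2\cosh s\,d\mu_0\ge 2\,d\mu_0$, with equality if and only if $\lambda_1=\lambda_2=1$, i.e. $d\widetilde f_\infty$ is orthogonal.

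The heart of the argument is that $\alpha_\infty\equiv 0$ makes $\Sigma_\infty$ a special Legendrian submanifold, so it lifts to a special Lagrangian cone in the Calabi--Yau cone over $\mathbb{S}^2\times\mathbb{S}^3$ (the conifold) and is therefore calibrated, hence area-minimizing in its Legendrian isotopy class. Since the flow is a Legendrian isotopy from $\Sigma_0$ to $\Sigma_\infty$ and $\Sigma_0$ is Legendrian isotopic to the diagonal isometry graph, whose area is exactly $2\operatorname{Area}(\mathbb{S}^2)$, one gets $\operatorname{Area}(\Sigma_\infty)\le 2\operatorname{Area}(\mathbb{S}^2)$. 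Combined with the sharp inequality $\operatorname{Area}(\Sigma_\infty)=\int 2\cosh s\,d\mu_0\ge 2\operatorname{Area}(\mathbb{S}^2)$ from the previous paragraph, equality is saturated, forcing $s\equiv 0$; thus $\lambda_1\equiv\lambda_2\equiv 1$ and $\widetilde f_\infty$ is a local, hence global, isometry of the round $\mathbb{S}^2$.

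Finally I would lift the isometry. Because $\pi_1,\pi_2$ are Riemannian submersions with geodesic Hopf fibres and $d\eta_i=\pi_i^{\ast}\omega_i$, the base isometry $\widetilde f_\infty$ lifts through $f_\infty^{\ast}\eta_2=\eta_0$ to a bundle isometry $f_\infty$, so that $\varphi_\infty=f_\infty\circ\pi_1$ is an isometry of the Sasakian $\mathbb{S}^3$; the flow itself is then the desired deformation of $\varphi=\varphi_0$ into $\varphi_\infty$. The main obstacle is the calibration step: making rigorous that the special Legendrian limit lifts to an \emph{area-minimizing} special Lagrangian cone in the conifold and that its Legendrian isotopy class genuinely contains the diagonal isometry graph, so that the pointwise inequality $2\cosh s\ge 2$ is saturated. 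A secondary difficulty is checking that the base isometry lifts to a true isometric contactomorphism and not merely up to a rotation of the Reeb fibres.
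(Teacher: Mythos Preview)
Your proposal takes a substantially longer route than the paper and, as written, contains a circularity that cannot be closed.

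The circular step is the clause ``$\Sigma_0$ is Legendrian isotopic to the diagonal isometry graph''. That $\varphi$ (equivalently its graph $\Sigma_0$) can be connected to an isometry graph is exactly the content of the corollary; you cannot feed it in as a hypothesis for the area comparison. If you meant ``homologous'' rather than ``Legendrian isotopic'', that part is harmless (every graph over $\mathbb{S}^2$ represents the same class since $H_2(\mathbb{S}^3)=0$), but then you owe a proof that a special Legendrian link in a Sasaki--Einstein $5$-manifold minimizes area in its homology class. Lifting to a special Lagrangian cone in the conifold gives a calibration for the \emph{cone}, but the cone is noncompact and the standard calibration inequality does not directly bound the link area against an arbitrary homologous competitor; you would need a cone-with-boundary or density argument that you have not supplied. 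So the ``main obstacle'' you flag is a genuine gap, compounded by the circular isotopy assumption.

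The paper bypasses all of this. Its mechanism is the quantity $\Omega_t=\ast\omega_t$ itself: the ODE comparison immediately after Proposition~2.1 already forces $\Omega_t\to 1$ uniformly as $t\to\infty$, and $\Omega$ is precisely the reciprocal of the factor in your own singular-value computation, namely
\[
\Omega=\frac{2}{\sqrt{(1+\lambda_1^2)(1+\lambda_2^2)}}=\frac{1}{\cosh s}\qquad\text{when }\lambda_1\lambda_2=1.
\]
Hence $\Omega_\infty=1$ forces $s\equiv 0$ pointwise, i.e.\ $\widetilde f_\infty$ is an isometry, with no calibration, no area minimization, and no a priori isotopy to a model. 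Equivalently, Theorem~\ref{Thm} produces a totally geodesic Legendrian limit ($\sup|A|^2\to 0$), and in this product a totally geodesic Legendrian graph is necessarily the graph of an isometry. Your computation that $\alpha_\infty=0$ via (\ref{d}) is correct but unnecessary for the isometry conclusion; it comes for free once $\widetilde f_\infty$ is an isometry. The lift of the base isometry through the Hopf fibrations to an isometric contactomorphism is then routine, essentially as you sketch in your final paragraph.
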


\begin{remark}
In \cite{w4}, M.-T. Wang also recaptured the Smale conjecture (\cite{sm1})
for the area-preserving symplectomorphism of the $2$-sphere.
\end{remark}

Note that any simply connected $5$-manifold with a regular Sasaki-Einstein
structure is diffeomorphic to 
\begin{equation*}
\mathbb{S}^{5}\#k(\mathbb{S}^{2}\times \mathbb{S}^{3})
\end{equation*}%
and the cone over $M$ is a Calabi-Yau $3$-fold (\cite{sm2}, \cite{b}). This
is the very first example for a minimal Legendrian surface in $\mathbb{S}%
^{2}\times \mathbb{S}^{3}$ besides $\mathbb{S}^{5}$ (\cite{h}, \cite{hk}).

In the upcoming article, in the spirit of the paper by Medos-Wang (\cite{mw}%
), we will work on the $\Lambda $-pinched area-preserving contactomorphism
of $\mathbb{S}^{2n+1}$ with 
\begin{equation*}
\begin{array}{c}
\frac{1}{\Lambda }g_{1}\leq \varphi ^{\ast }(g_{2})\leq \Lambda g_{1}%
\end{array}%
\end{equation*}%
via the Legendrian mean curvature flow in $\mathbb{CP}^{n}\times \mathbb{S}%
^{2n+1},$ $n\geq 2.$ Then we should be able to obtain the Smale conjecture
on a Sasakian $(2n+1)$-sphere under the $\Lambda $-pinched condition.

Parts of the results in the article are served as a Legendrian analogue of
the mean curvature flow (\cite{w4}), In particular, we recapture the Smale
conjecture on a Sasakian $3$-sphere via such a Legendrian graph mean
curvature flow. The Sasaki analogues of the monotonicity formula and blow-up
analysis due G. Huisken (\cite{hu}), B. White (\cite{wh}) and M.-T. Wang (%
\cite{w4}) are the key steps.

This article is organized as the followings. In section $2$, the evolution
of a transverse K\"{a}hler form along the Legendrian mean curvature flow is
derived. In section $3,$ we study the singularities and long time existence
of Legendrian mean curvature flow of Legendrian surface in a Sasaki-Einstein
manifold. Asymptotic convergence at infinity is discussed in the final
section.\ 

\textbf{Acknowledgements.} Part of the project was done during the first
named author visiting to Department of Mathematics under the supported by
iCAG program of Higher Education Sprout Project of National Taiwan Normal
University and the Ministry of Education (MOE) in Taiwan. He would like to
express his gratitude for the warm hospitality there.

\section{Evolution Equations}

In this section, we assume $\overline{\omega }$ be a parallel transverse K%
\"{a}hler form on $M$ with $\overline{\omega }(X,Y)=\left\langle \Phi
X,Y\right\rangle $ for $X,$ $Y\in \mathrm{Ker}\eta .$ Let $\omega
_{t}=F_{t}^{\ast }(\overline{\omega })$ be the pull-back of $\overline{%
\omega }$ on $\Sigma _{t},$ we shall compute the evolution equation of $%
\Omega _{t}=\ast _{t}\omega _{t}$ along the Legendrian mean curvature flow.

We first recall notions of Sasakian manifolds as in \cite{bl}, \cite{s1} and 
\cite{chw} for some details. Let $(M^{2n+1},\eta ,\mathcal{T})$ be a contact
manifold endowed with a contact one-form $\eta $ and its associate Reeb
vector field $\mathcal{T}$. Moreover, $\xi $ is non-integrable and $\omega
:=d\eta $ defines a symplectic vector bundle $(\xi ,\omega _{|\xi \oplus \xi
})$.

A Riemannian metric $g$ on $M$\ is called an adapted metric to a contact
manifold $(M^{2n+1},\eta ,\mathcal{T})$ if \ 
\begin{equation*}
g(\mathcal{T},X)=\eta (X)
\end{equation*}%
for all $X\in TM.$ Then the metric 
\begin{equation*}
g=\eta \otimes \eta +\omega (\cdot ,\Phi \cdot )
\end{equation*}%
and in local coordinates 
\begin{equation*}
g_{\alpha \beta }=\eta _{\alpha }\eta _{\beta }+\omega _{\alpha \gamma }\Phi
_{\beta }^{\gamma }.
\end{equation*}%
Here $\widetilde{J}$ is an almost complex structure on the sympletic
subbundle $\xi $ which can be extend to a section $\Phi \in \Gamma (T^{\ast
}M\otimes TM)$ by 
\begin{equation*}
\begin{array}{c}
\Phi (X):=\widetilde{J}(\pi (X))%
\end{array}%
\end{equation*}%
for the projection $\pi :TM\rightarrow \xi $ with $\pi (X)=X-\eta (X)%
\mathcal{T}.$ Then $\Phi ^{2}=-\pi $ and $\Phi _{\alpha }^{\beta }\Phi
_{\gamma }^{\alpha }=-\pi _{\gamma }^{\beta }.$

A contact $(2n+1)$-manifold $(M,\eta ,g,\mathcal{T},\Phi )$ is called a
Sasakian manifold if $\Phi $ is integrable. This is equivalent to say that (%
\cite{bg}) the cone%
\begin{equation*}
(C(M),\overline{g}):=(\mathbb{R}^{+}\times M\mathbf{,}\text{ }dr^{2}+r^{2}g)
\end{equation*}%
such that $(C(M),\overline{g},J)$ is a K\"{a}hler cone.

Let $F:\Sigma \times \lbrack 0,T)\rightarrow (M,\eta ,\mathcal{T},g,\Phi )$
satisfies the Legendrian mean curvature flow (\ref{c}). The immersion $F_{t}$
induces a pull-back metric $g_{t}$ on $\Sigma $. Let $d\mu $ be a $2$-form
on $\Sigma $ which represent the fixed orientation of $\Sigma $. The volume
form of $g_{t}$ is denoted by $d\mu _{t}=\sqrt{\det g_{t}}d\mu .$ We then
consider the evolution equation of $\omega _{t}=F_{t}^{\ast }(\overline{%
\omega }).$ This is a family of time-dependent $2$-forms on the fixed
surface $\Sigma $.

For any vector fields $X,$ $Y\in T\Sigma $ and $N\in N\Sigma ,$ we define
the second fundamental form $A(X,Y)=\left( \overline{\nabla }_{X}Y\right)
^{T}$ and $B(X,N)=\left( \overline{\nabla }_{X}N\right) ^{T}.$ Then we have%
\begin{equation}
\begin{array}{lll}
\frac{d}{dt}\omega _{t}(X,Y) & = & \overline{\omega }(\left( \overline{%
\nabla }_{X}(H-2\alpha \mathcal{T})\right) ^{N},Y)+\overline{\omega }%
(X,\left( \overline{\nabla }_{Y}(H-2\alpha \mathcal{T})\right) ^{N}) \\ 
&  & +\overline{\omega }(B(X,H-2\alpha \mathcal{T}),Y)+\overline{\omega }%
(B(Y,H-2\alpha \mathcal{T}),X),%
\end{array}
\label{3}
\end{equation}%
The above equation follows from 
\begin{equation*}
\begin{array}{c}
\frac{d}{dt}\omega _{t}(\partial _{k},\partial _{l})=\overline{\omega }(%
\overline{\nabla }_{H-2\alpha \mathcal{T}}\partial _{k},\partial _{l})+%
\overline{\omega }(\partial _{k},\overline{\nabla }_{H-2\alpha \mathcal{T}%
}\partial _{l})%
\end{array}%
\end{equation*}%
and the definition 
\begin{equation*}
\begin{array}{c}
\overline{\nabla }_{\partial _{k}}\left( H-2\alpha \mathcal{T}\right)
=\left( \overline{\nabla }_{\partial _{k}}\left( H-2\alpha \mathcal{T}%
\right) \right) ^{N}+B(\partial _{k},H-2\alpha \mathcal{T}).%
\end{array}%
\end{equation*}

Let $\overline{\omega }$ be a parallel $2$-form on $M$ and $\omega =F^{\ast
}(\overline{\omega })$ be the pull-back of $\overline{\omega }$ on $\Sigma $%
. We first compute the rough Laplacian of $\omega $ on $\Sigma $ 
\begin{equation*}
\begin{array}{c}
\Delta \omega =g^{kl}\nabla _{\partial _{k}}\nabla _{\partial _{l}}\omega .%
\end{array}%
\end{equation*}%
It can be showed that 
\begin{equation}
\begin{array}{lll}
\Delta \omega (X,Y) & = & \overline{\omega }(\left( \overline{\nabla }%
_{X}H\right) ^{N},Y)-\overline{\omega }(\left( \overline{\nabla }%
_{Y}H\right) ^{N},X) \\ 
&  & -g^{kl}\overline{\omega }(K\left( (\partial _{k},X),\partial
_{l}\right) ^{N},Y)+g^{kl}\overline{\omega }(K\left( (\partial
_{k},Y),\partial _{l}\right) ^{N},X) \\ 
&  & +g^{kl}\overline{\omega }(B(\partial _{k},A(\partial _{l},X)),Y)-g^{kl}%
\overline{\omega }(B(\partial _{k},A(\partial _{l},Y)),X) \\ 
&  & +2g^{kl}\overline{\omega }(A(\partial _{k},X),A(\partial _{l},Y)),%
\end{array}
\label{1}
\end{equation}%
where $K(X,Y)Z=-\overline{\nabla }_{X}\overline{\nabla }_{Y}Z+\overline{%
\nabla }_{Y}\overline{\nabla }_{X}Z-\overline{\nabla }_{[X,Y]}Z$ is the
curvature of $M$. Let $F_{t}:\Sigma \rightarrow (M,\eta ,\mathcal{T},g,\Phi
) $ be the $t$-slice of a Legendrian mean curvature flow and $\omega
_{t}=F_{t}^{\ast }(\overline{\omega })$ be the pull-back of $\overline{%
\omega }$ on $\Sigma _{t}$. Then $\Omega _{t}:=\ast _{t}\omega _{t}$
satisfies the following parabolic equation%
\begin{equation}
\begin{array}{lll}
(\frac{d}{dt}-\Delta _{t})\Omega _{t} & = & \left\vert A\right\vert
^{2}\Omega _{t}-2\alpha \left\langle \mathcal{T},H\right\rangle \Omega _{t}-2%
\overline{\omega }(A(e_{k},e_{1}),A(e_{k},e_{2})) \\ 
&  & +\overline{\omega }(K\left( (e_{k},e_{1}),e_{k}\right) ^{N},e_{2})-%
\overline{\omega }(K\left( (e_{k},e_{2}),e_{k}\right) ^{N},e_{1}) \\ 
&  & +\overline{\omega }(\left( \overline{\nabla }_{e_{1}}\left( -2\alpha 
\mathcal{T}\right) \right) ^{N},e_{2})+\overline{\omega }(e_{1},\left( 
\overline{\nabla }_{e_{2}}\left( -2\alpha \mathcal{T}\right) \right) ^{N})
\\ 
&  & +\overline{\omega }(B(e_{1},-2\alpha \mathcal{T}),e_{2})+\overline{%
\omega }(e_{1},B(-2\alpha \mathcal{T},e_{2})).%
\end{array}
\label{2}
\end{equation}

Now we let $\overline{\omega }$ be a parallel transverse K\"{a}hler form on $%
M$ with $\overline{\omega }(X,Y)=\left\langle \Phi X,Y\right\rangle $ for $%
X, $ $Y\in \mathrm{Ker}\eta .$ We know that $\left\langle \mathcal{T}%
,H\right\rangle =0.$ Since $\overline{\omega }$ is basic and $\overline{%
\nabla }_{e_{1}}\mathcal{T}=\Phi e_{1},$ thus 
\begin{equation*}
\begin{array}{c}
\overline{\omega }(\left( \overline{\nabla }_{e_{1}}\left( -2\alpha \mathcal{%
T}\right) \right) ^{N},e_{2})=-2\alpha \overline{\omega }(\left( \Phi
e_{1}\right) ^{N},e_{2})=-2\alpha \langle \left( \Phi e_{1}\right) ^{N},\Phi
e_{2}\rangle =0.%
\end{array}%
\end{equation*}%
Also $\left\langle B(e_{i},-2\alpha \mathcal{T}),e_{j}\right\rangle
=\left\langle A(e_{i},e_{j}),2\alpha \mathcal{T}\right\rangle =0$ for $i,$ $%
j=1,$ $2$. Then the equation (\ref{2}) becomes%
\begin{equation}
\begin{array}{lll}
(\frac{d}{dt}-\Delta _{t})\Omega _{t} & = & \left\vert A\right\vert
^{2}\Omega _{t}-2\overline{\omega }(A(e_{k},e_{1}),A(e_{k},e_{2})) \\ 
&  & +\overline{\omega }(K\left( (e_{k},e_{1}),e_{k}\right) ^{N},e_{2})-%
\overline{\omega }(K\left( (e_{k},e_{2}),e_{k}\right) ^{N},e_{1}).%
\end{array}
\label{4}
\end{equation}%
It was computed in \cite{w4} that%
\begin{equation*}
\begin{array}{c}
\overline{\omega }(K\left( (e_{k},e_{1}),e_{k}\right) ^{N},e_{2})-\overline{%
\omega }(K\left( (e_{k},e_{2}),e_{k}\right) ^{N},e_{1})=(1-\Omega
_{t}^{2})Ric^{T}(\Phi e_{1},e_{2}),%
\end{array}%
\end{equation*}%
here $Ric^{T}$ is the transverse Ricci tensor of $\overline{\omega }.$ And%
\begin{equation*}
\begin{array}{lll}
\left\vert A\right\vert ^{2}\Omega _{t}-2\overline{\omega }%
(A(e_{k},e_{1}),A(e_{k},e_{2})) & = & (\left\vert A\right\vert
^{2}-2h_{11k}h_{22k}+2h_{12k}h_{21k})\Omega _{t} \\ 
& = & [(h_{11k}-h_{22k})^{2}+(h_{12k}+h_{21k})^{2}]\Omega _{t} \\ 
& = & (2|A|^{2}-|H|^{2})\Omega _{t},%
\end{array}%
\end{equation*}%
where $A(e_{i},e_{j})=h_{1ij}\Phi e_{1}+h_{2ij}\Phi e_{2}$ is the second
fundamental form and $h_{kij}=\langle A(e_{i},e_{j}),\Phi e_{k}\rangle ,$
which is full symmetries. We have thus proved the following proposition.

\begin{proposition}
Let $\overline{\omega }$ be a parallel transverse K\"{a}hler form on $M$ and 
$Ric^{T}$ be the transverse Ricci tensor. Let $\omega _{t}=F_{t}^{\ast }(%
\overline{\omega })$ be the pull-back of $\overline{\omega }$ on $\Sigma ,$
then $\Omega _{t}=\ast _{t}\omega _{t}$ satisfies the following equation%
\begin{equation}
\begin{array}{l}
(\frac{d}{dt}-\Delta )\Omega _{t}=[2|A|^{2}-|H|^{2}]\Omega _{t}+(1-\Omega
_{t}^{2})Ric^{T}(\Phi e_{1},e_{2}).%
\end{array}
\label{5}
\end{equation}
\end{proposition}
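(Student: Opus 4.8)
The plan is to treat $\Omega_t=\ast_t\omega_t$ as a time-dependent scalar on the fixed surface $\Sigma$: since $\dim\Sigma=2$, in any oriented orthonormal frame $\{e_1,e_2\}$ of $T\Sigma_t$ one has $\Omega_t=\omega_t(e_1,e_2)=\langle\Phi e_1,e_2\rangle$, and I would obtain the parabolic operator $(\tfrac{d}{dt}-\Delta)\Omega_t$ by computing the time derivative and the rough Laplacian separately and then subtracting. For the time derivative I would start from the flow $\tfrac{d}{dt}F=H-2\alpha\mathcal{T}$ and differentiate $\omega_t=F_t^\ast\overline{\omega}$; because $\overline{\omega}$ is parallel, the Lie derivative of the pull-back reduces to covariant derivatives of the velocity, and splitting $\overline{\nabla}_{\partial_k}(H-2\alpha\mathcal{T})$ into its normal component and the tangential component $B(\partial_k,H-2\alpha\mathcal{T})$ yields the four-term identity (\ref{3}). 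The subtle point is that $\ast_t$ moves with the metric through $\tfrac{d}{dt}g_{kl}=-2\langle H,A(\partial_k,\partial_l)\rangle$; keeping track of this volume-form contribution is what upgrades $\tfrac{d}{dt}\omega_t$ to $\tfrac{d}{dt}\Omega_t$ and supplies the $|A|^2\Omega_t$ and $-2\alpha\langle\mathcal{T},H\rangle\Omega_t$ terms.

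For the Laplacian I would carry out the Weitzenb\"ock computation behind (\ref{1}): apply $g^{kl}\nabla_{\partial_k}\nabla_{\partial_l}$ to $\omega$, commute the two covariant derivatives to bring in the ambient curvature operator $K$, and use that $\overline{\omega}$ is parallel so that every derivative lands on the frame through $A$ and $B$. This produces the normal-curvature terms $\overline{\omega}(K(\,\cdot\,)^N,\cdot)$ together with the quadratic second-fundamental-form terms. Subtracting $\Delta\omega$ from the time derivative and folding in the moving Hodge star assembles the general evolution equation (\ref{2}).

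Specializing (\ref{2}) to a parallel \emph{transverse} K\"ahler form and simplifying should then be routine once the Sasakian structure is used. Since $\overline{\omega}$ is basic it annihilates $\mathcal{T}$, and since $H\in\Phi T\Sigma\subset\mathrm{Ker}\,\eta$ we have $\langle\mathcal{T},H\rangle=0$; using $\overline{\nabla}_{e_i}\mathcal{T}=\Phi e_i$ the term $\overline{\omega}((\overline{\nabla}_{e_1}(-2\alpha\mathcal{T}))^N,e_2)$ collapses to $-2\alpha\langle(\Phi e_1)^N,\Phi e_2\rangle=0$, while the $B(e_i,-2\alpha\mathcal{T})$ contributions vanish because $\langle B(e_i,\mathcal{T}),e_j\rangle=\langle A(e_i,e_j),\mathcal{T}\rangle=0$. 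This reduces (\ref{2}) to (\ref{4}).

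Two genuine computations then finish the proof. For the curvature pair I would follow M.-T. Wang \cite{w4}: project the ambient curvature onto the normal bundle and, using the transverse K\"ahler structure of the Sasaki-Einstein ambient space together with the Gauss relation, recognize the surviving contraction as $(1-\Omega_t^2)Ric^T(\Phi e_1,e_2)$, the factor $1-\Omega_t^2$ appearing because $\Omega_t=\langle\Phi e_1,e_2\rangle$ records the K\"ahler angle and its complement governs the normal components. For the quadratic terms I would write $A(e_i,e_j)=h_{1ij}\Phi e_1+h_{2ij}\Phi e_2$ with $h_{kij}=\langle A(e_i,e_j),\Phi e_k\rangle$ fully symmetric by the Legendrian Codazzi identity, expand $|A|^2\Omega_t-2\overline{\omega}(A(e_k,e_1),A(e_k,e_2))$ as $(|A|^2-2h_{11k}h_{22k}+2h_{12k}h_{21k})\Omega_t=[(h_{11k}-h_{22k})^2+(h_{12k}+h_{21k})^2]\Omega_t$, and simplify to $(2|A|^2-|H|^2)\Omega_t$. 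Combining both evaluations with (\ref{4}) yields (\ref{5}). I expect the curvature identification — passing from the raw ambient curvature to the clean transverse Ricci expression with precisely the $1-\Omega_t^2$ coefficient — to be the main obstacle, since it requires carefully matching the Sasakian curvature identities against their K\"ahler analogues in \cite{w4}.
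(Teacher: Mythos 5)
Your proposal follows essentially the same route as the paper: the same splitting of $\overline{\nabla}_{\partial_k}(H-2\alpha\mathcal{T})$ giving (\ref{3}), the same Weitzenb\"ock computation (\ref{1}), assembly into (\ref{2}), the identical Sasakian reductions ($\langle\mathcal{T},H\rangle=0$, $\overline{\nabla}_{e_i}\mathcal{T}=\Phi e_i$, and vanishing of the $B(e_i,-2\alpha\mathcal{T})$ terms) down to (\ref{4}), the appeal to \cite{w4} for the identity $(1-\Omega_t^2)Ric^T(\Phi e_1,e_2)$, and the same symmetric expansion of the quadratic term via $h_{kij}$. The paper likewise imports the curvature identification from \cite{w4} rather than rederiving it, so the step you flagged as the main obstacle is handled there exactly as you propose.
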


When $M$ is a Sasaki-Einstein manifold. By applying the maximum principle to
the equation (\ref{5}) with $Ric^{T}(\Phi e_{1},e_{2})=c\Omega _{t}$, we see
that $\Omega _{t}>0$ is preserved, or $\Sigma _{t}$ remains a Legendrian
graph of a contactomorphism. Compare $\Omega _{t}$ with the solution of the
ordinary differential equation 
\begin{equation*}
\begin{array}{c}
\frac{d}{dt}\varphi =c\varphi (1-\varphi ^{2}),%
\end{array}%
\end{equation*}%
we deduce%
\begin{equation*}
\begin{array}{c}
\Omega _{t}(x,t)\geq \frac{\gamma e^{ct}}{\sqrt{1+\gamma ^{2}e^{2ct}}},%
\end{array}%
\end{equation*}%
where $\gamma >0$ is defined by $\frac{\gamma }{\sqrt{1+\gamma ^{2}}}%
=\min_{\Sigma _{0}}\Omega _{0}.$ In particular, when $c=1$, we see that $%
\Omega _{t}$ approaches $1$ as $t\rightarrow \infty $.

\section{Blow-up Analysis and The Long-time Existence}

In this section, we study the asymptotic behavior of singularities and the
long time existence of the Legendrian mean curvature flow. We will show that
no type $I$ singularity will occur in the Legendrian mean curvature flow of
Legendrian surface in a $5$-dimensional Sasaki-Einstein manifold.

First we study the blow-up analysis and monotonicity formula for the
backward heat kernel. We fix an isometric embedding $i:M\rightarrow \mathbb{R%
}^{5+m}$. Given an Legendrian immersion $F:\Sigma \rightarrow M^{5}$, we
consider $\overline{F}=i\circ F$ as an immersed Legendrian submanifold in $%
\mathbb{R}^{5+m}$. Denote by $H$ the Legendrian mean curvature vector of $%
\Sigma $ with respect to $M$ and by $\overline{H}$ the Legendrian mean
curvature vector of $\Sigma $ with respect to $\mathbb{R}^{5+m},$ then 
\begin{equation*}
\begin{array}{c}
\overline{H}-H=\overline{A}(e_{i},e_{i})=-E,%
\end{array}%
\end{equation*}%
where $\overline{A}$ is the second fundamental form of $M$ in $\mathbb{R}%
^{5+m}$ and $\{e_{i}\}$ is an orthonormal basis for $T\Sigma _{t}$. And $|E|$
is bounded if $M$ has bounded geometry. The equation of the Legendrian mean
curvature flow becomes%
\begin{equation}
\begin{array}{c}
\frac{d}{dt}\overline{F}=H-2\alpha \mathcal{T}=\overline{H}+E-2\alpha 
\mathcal{T}.%
\end{array}
\label{6}
\end{equation}

Fix $y_{0}\in \mathbb{R}^{5+m}$ and $t_{0}\in \mathbb{R}$ and consider the
backward heat kernel at $(y_{0},t_{0})$, 
\begin{equation*}
\begin{array}{c}
\rho _{y_{0},t_{0}}=\frac{1}{(4\pi (t_{0}-t))^{n/2}}\exp (\frac{-\left\vert
y-y_{0}\right\vert ^{2}}{4(t_{0}-t)}).%
\end{array}%
\end{equation*}%
defined on $\mathbb{R}^{5+m}\times (-\infty ,t_{0}).$ The backward heat
kernel $\rho _{y_{0},t_{0}}$ satisfies the following parabolic equation
along the Legendrian mean curvature flow.%
\begin{equation}
\begin{array}{c}
(\frac{d}{dt}+\Delta )\rho _{y_{0},t_{0}}=-\rho _{y_{0},t_{0}}(\frac{|%
\overline{F}^{\bot }|^{2}}{4(t_{0}-t)^{2}}+\frac{1}{(t_{0}-t)}\langle 
\overline{F}^{\bot },H-\alpha \mathcal{T}-\frac{1}{2}E\rangle ),%
\end{array}
\label{7}
\end{equation}%
where $\overline{F}^{\bot }$ is the component of $\overline{F}\in T\mathbb{R}%
^{5+m}$ in $T\mathbb{R}^{5+m}/T\Sigma _{t}.$ The equation (\ref{7}) follows
from the following two identities:%
\begin{equation*}
\begin{array}{c}
\frac{d}{dt}\rho _{y_{0},t_{0}}=-\rho _{y_{0},t_{0}}(-\frac{n}{2(t_{0}-t)}+%
\frac{1}{2(t_{0}-t)}\left\langle \overline{F},H-2\alpha \mathcal{T}%
\right\rangle +\frac{|\overline{F}|^{2}}{4(t_{0}-t)^{2}}),%
\end{array}%
\end{equation*}%
and the equation (5.5) in \cite{w4}%
\begin{equation*}
\begin{array}{c}
\Delta \rho _{y_{0},t_{0}}=\rho _{y_{0},t_{0}}(\frac{|\overline{F}^{\bot
}|^{2}}{4(t_{0}-t)^{2}}-\frac{1}{2(t_{0}-t)}\left\langle \overline{F},%
\overline{H}\right\rangle -\frac{n}{2(t_{0}-t)}).%
\end{array}%
\end{equation*}%
By integrating the equation (\ref{7}) along the Legendrian mean curvature
flow (\ref{6}), we get%
\begin{equation*}
\begin{array}{lll}
\frac{d}{dt}\int_{\Sigma _{t}}\rho _{y_{0},t_{0}}d\mu _{t} & = & 
-\int_{\Sigma _{t}}[\frac{|\overline{F}^{\bot }|^{2}}{4(t_{0}-t)^{2}}+\frac{1%
}{(t_{0}-t)}\langle \overline{F}^{\bot },H-\alpha \mathcal{T}-\frac{1}{2}%
E\rangle +\left\vert H\right\vert ^{2}]\rho _{y_{0},t_{0}}d\mu _{t} \\ 
& = & -\int_{\Sigma _{t}}|H-\alpha \mathcal{T}-\frac{E}{2}+\frac{\overline{F}%
^{\bot }}{2(t_{0}-t)}|^{2}\rho _{y_{0},t_{0}}d\mu _{t}+\int_{\Sigma
_{t}}(\alpha ^{2}+\frac{1}{4}|E|^{2})\rho _{y_{0},t_{0}}d\mu _{t}.%
\end{array}%
\end{equation*}%
We can rewrite the above integral as 
\begin{equation*}
\begin{array}{lll}
\frac{d}{dt}\int_{\Sigma _{t}}\rho _{y_{0},t_{0}}d\mu _{t} & = & -\frac{1}{2}%
\int_{\Sigma _{t}}[|H+\frac{\overline{F}^{\bot }}{2(t_{0}-t)}%
|^{2}+|H-2\alpha \mathcal{T}-E+\frac{\overline{F}^{\bot }}{2(t_{0}-t)}%
|^{2}]\rho _{y_{0},t_{0}}d\mu _{t} \\ 
&  & +\int_{\Sigma _{t}}(4\alpha ^{2}+|E|^{2})\rho _{y_{0},t_{0}}d\mu _{t}
\\ 
& \leq & C(\alpha (t_{0}),E(t_{0}))-\frac{1}{2}\int_{\Sigma _{t}}|H-2\alpha 
\mathcal{T}-E+\frac{\overline{F}^{\bot }}{2(t_{0}-t)}|^{2}\rho
_{y_{0},t_{0}}d\mu _{t}.%
\end{array}%
\end{equation*}%
Then we have the following monotonicity formula along the Legendrian mean
curvature flow.

\begin{proposition}
For the Legendrian mean curvature flow (\ref{6}) on $[0,t_{0})$, if we
isometrically embed $M$ into $\mathbb{R}^{5+m}$ and take the backward heat
kernel $\rho _{y_{0},t_{0}}$ for $y_{0}\in \mathbb{R}^{5+m}$, then 
\begin{equation}
\begin{array}{c}
\frac{d}{dt}\int_{\Sigma _{t}}\rho _{y_{0},t_{0}}d\mu _{t}\leq C-\frac{1}{2}%
\int_{\Sigma _{t}}|H-2\alpha \mathcal{T}-E+\frac{\overline{F}^{\bot }}{%
2(t_{0}-t)}|^{2}\rho _{y_{0},t_{0}}d\mu _{t},%
\end{array}
\label{2023}
\end{equation}%
for some positive constant $C=C(\alpha (t_{0}),E(t_{0}))$. Here $\overline{F}%
^{\bot }$ is the component of $\overline{F}\in T\mathbb{R}^{5+m}$ in the
normal space of $\Sigma _{t}$ in $\mathbb{R}^{5+m}$.
\end{proposition}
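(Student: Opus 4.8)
The plan is to adapt the weighted-monotonicity computation of Huisken, White and Wang to the Legendrian setting, the only new ingredients being the drift term $-2\alpha\mathcal{T}$ and the exterior correction $E$ coming from the embedding $i\colon M\hookrightarrow\mathbb{R}^{5+m}$. The structural fact that makes everything collapse is that the flow velocity $V:=H-2\alpha\mathcal{T}$ is everywhere \emph{normal} to $\Sigma_t$: indeed $T\Sigma_t\subset\ker\eta$ because $\Sigma_t$ is Legendrian, so $g(\mathcal{T},X)=\eta(X)=0$ for $X\in T\Sigma_t$, while $H$ is normal by construction. Consequently the first variation of the area element carries no tangential divergence and reduces to $\frac{\partial}{\partial t}d\mu_t=-\langle\overline{H},V\rangle\,d\mu_t$.

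First I would establish the pointwise parabolic identity (\ref{7}). Differentiating the Gaussian $\rho_{y_0,t_0}$ along the flow (\ref{6}) gives the first displayed identity, a direct calculation using $\frac{d}{dt}\overline{F}=V$. For the intrinsic Laplacian I would use the two classical facts $\Delta\overline{F}=\overline{H}$ and $\Delta|\overline{F}-y_0|^2=2n+2\langle\overline{F}-y_0,\overline{H}\rangle$ on $\Sigma_t$, which yield the second displayed identity (that term is naturally $|\overline{F}^{T}|^2$, its perpendicular complement being produced only after summation). Adding the two identities, the singular $\tfrac{n}{2(t_0-t)}$ terms cancel, the quadratic terms combine as $|\overline{F}|^2-|\overline{F}^{T}|^2=|\overline{F}^{\bot}|^2$, and, writing $\overline{H}=H-E$, the linear terms recombine into $\langle\overline{F}^{\bot},H-\alpha\mathcal{T}-\tfrac12 E\rangle$, giving exactly (\ref{7}).

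Next I would integrate (\ref{7}) over the closed surface $\Sigma_t$. Writing $\frac{d}{dt}\int\rho\,d\mu_t=\int\frac{d}{dt}\rho\,d\mu_t+\int\rho\,\frac{\partial}{\partial t}d\mu_t$, the second term equals $-\int|H|^2\rho\,d\mu_t$: here I use the orthogonalities $\langle H,\mathcal{T}\rangle=0$ established above, $E\perp TM$ (so $\langle E,H\rangle=\langle E,\mathcal{T}\rangle=0$), and hence $\langle\overline{H},\mathcal{T}\rangle=0$, which give $\langle\overline{H},V\rangle=\langle H-E,H\rangle=|H|^2$. Since $\int_{\Sigma_t}\Delta\rho\,d\mu_t=0$ by the divergence theorem, replacing $\frac{d}{dt}\rho$ by the right-hand side of (\ref{7}) minus $\Delta\rho$ yields the first post-integration display. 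Completing the square against $\frac{\overline{F}^{\bot}}{2(t_0-t)}$ and using the same orthogonalities to evaluate $|H-\alpha\mathcal{T}-\tfrac12E|^2=|H|^2+\alpha^2+\tfrac14|E|^2$ produces the full square $|H-\alpha\mathcal{T}-\tfrac{E}{2}+\frac{\overline{F}^{\bot}}{2(t_0-t)}|^2$ together with the bounded remainder $\int(\alpha^2+\tfrac14|E|^2)\rho\,d\mu_t$; a parallelogram identity then rewrites this in the symmetric form of the excerpt, at the cost of an irrelevant enlargement of the remainder constant.

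Finally, to reach (\ref{2023}) I would discard the manifestly non-positive term $-\tfrac12\int|H+\frac{\overline{F}^{\bot}}{2(t_0-t)}|^2\rho\,d\mu_t$ and bound the remainder by a genuine constant: $|E|$ is bounded because $M$ is compact with bounded geometry, and $|\alpha|$ is bounded on $[0,t_0)$ by applying the maximum principle to its evolution equation (\ref{d}). The one point requiring care is that the remainder carries the factor $\int\rho\,d\mu_t$, so I would first feed the crude inequality $\frac{d}{dt}\int\rho\,d\mu_t\le C_0\int\rho\,d\mu_t$ into Gronwall's lemma to bound the weighted area on $[0,t_0)$, after which the remainder is genuinely $\le C=C(\alpha(t_0),E(t_0))$. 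The main obstacle is bookkeeping rather than analysis: one must track the two non-standard contributions ($-2\alpha\mathcal{T}$ and $E$) through the completion of the square and verify that every mixed term vanishes by the Sasakian and embedding orthogonalities, exactly as the Legendrian condition $T\Sigma_t\subset\ker\eta$ guarantees.
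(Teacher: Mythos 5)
Your proposal is correct and follows essentially the same route as the paper: the same isometric embedding $\overline F=i\circ F$, the pointwise identity (\ref{7}) obtained by adding the time-derivative and intrinsic-Laplacian identities for the Gaussian, integration with the first-variation term $-\int|H|^{2}\rho\,d\mu_t$ via the orthogonalities $\langle H,\mathcal{T}\rangle=\langle E,H\rangle=\langle E,\mathcal{T}\rangle=0$, completion of the square to $|H-\alpha\mathcal{T}-\tfrac{E}{2}+\tfrac{\overline F^{\bot}}{2(t_0-t)}|^{2}$ with remainder $\alpha^{2}+\tfrac14|E|^{2}$, the parallelogram rearrangement, and discarding the nonpositive term $-\tfrac12\int|H+\tfrac{\overline F^{\bot}}{2(t_0-t)}|^{2}\rho\,d\mu_t$. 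Your two refinements only tighten the same argument: you correctly observe that the Laplacian identity naturally carries $|\overline F^{T}|^{2}$ (the paper's intermediate display writes $\overline F^{\bot}$ there, which would give the wrong sum and is evidently a typo), and you make explicit, via the maximum principle for (\ref{d}) and a Gronwall bound on $\int_{\Sigma_t}\rho\,d\mu_t$, why the remainder is controlled by a genuine constant $C(\alpha(t_0),E(t_0))$, a point the paper leaves implicit.
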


This monotonicity formula implies that $\mathrm{lim}_{t\rightarrow
t_{0}}\int_{\Sigma _{t}}\rho _{y_{0},t_{0}}d\mu _{t}$ exists.

The Legendrian mean curvature flow can extends smoothly to $%
t_{0}+\varepsilon $ if the second fundamental form is bounded $\sup_{\Sigma
_{t}}|A|^{2}\leq C$ as $t\rightarrow t_{0}$. Therefore if a singularity is
forming at $t_{0}$, then $\sup_{\Sigma _{t}}|A|^{2}\rightarrow \infty $ as $%
t\rightarrow t_{0}.$

\begin{definition}
A singularity $t=t_{0}$ is called a type $I$ singularity if there is a $C>0$
such that 
\begin{equation*}
\begin{array}{c}
\sup_{\Sigma _{t}}|A|^{2}\leq \frac{C}{t_{0}-t}%
\end{array}%
\end{equation*}%
for all $t<t_{0}.$ Otherwise, it is called Type $II$.
\end{definition}

\begin{definition}
For a Legendrian mean curvature flow $F:\Sigma \times \lbrack
0,t_{0})\rightarrow M$, we consider the image of the map%
\begin{equation*}
F\times 1:\Sigma \times \lbrack 0,t_{0})\rightarrow M\times \mathbb{R}
\end{equation*}%
by $(x,t)\longmapsto (F(x,t),t)$ is called the space-time track $\mathcal{F}$
of the flow.
\end{definition}

\begin{definition}
The parabolic dilation of scale $\lambda >0$ at $(y_{0},t_{0})$ is given by 
\begin{equation*}
\begin{array}{crll}
D_{\lambda }: & \mathbb{R}^{5+m}\times \mathbb{R} & \rightarrow & \mathbb{R}%
^{5+m}\times \mathbb{R} \\ 
& (y,t)\text{ \ \ } & \mapsto & (\lambda (y-y_{0}),\lambda ^{2}(t-t_{0})) \\ 
& t\in \lbrack 0,t_{0}) & \mapsto & s=\lambda ^{2}(t-t_{0})\in \lbrack
-\lambda ^{2}t_{0},0).%
\end{array}%
\end{equation*}
\end{definition}

Consider the parabolic dilation of the space-time track of a type $I$
singularity, we get%
\begin{equation*}
\begin{array}{c}
|A|^{2}(\Sigma _{s}^{\lambda })=\lambda ^{-2}|A|^{2}(\Sigma _{t})=-\frac{1}{s%
}(t_{0}-t)|A|^{2}(\Sigma _{t})\leq -\frac{C}{s}.%
\end{array}%
\end{equation*}%
Here we set 
\begin{equation*}
\begin{array}{c}
\Sigma _{s}^{\lambda }=D_{\lambda }(\Sigma _{t}-y_{0})=\lambda (\Sigma
_{t_{0}+\frac{s}{\lambda ^{2}}}-y_{0})%
\end{array}%
\end{equation*}%
for $t=t_{0}+\frac{s}{\lambda ^{2}}$ and $s\in \lbrack -\lambda
^{2}t_{0},0). $ Hence for any fixed $s$, the second fundamental form $%
|A|^{2}(\Sigma _{s}^{\lambda })$ is bounded. By Arzel\`{a}-Ascoli theorem,
on any compact subset of space time, there exists smoothly convergent
subsequence of $\{D_{\lambda }\mathcal{F}\}$ as $\lambda \rightarrow \infty $%
.

\begin{proposition}
If there is a type $I$ singularity at $t=t_{0}$, there exists a subsequence $%
\{\lambda _{i}\}$ such that 
\begin{equation*}
D_{\lambda _{i}}\mathcal{F}\rightarrow \mathcal{F}_{\infty },
\end{equation*}%
the space-time track of a smooth flow that exists on $(-\infty ,0)$.
\end{proposition}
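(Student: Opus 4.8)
The plan is to convert the curvature estimate already recorded for the rescaled flows into smooth subconvergence by a bootstrap together with Arzel\`a--Ascoli, and then to identify the limit as a smooth eternal flow. Recall that under the type $I$ hypothesis we have computed $|A|^{2}(\Sigma_{s}^{\lambda})\le -C/s$, so on each compact time-slab $\{s:-T\le s\le -1/T\}$ (with $T>1$) the second fundamental form of the rescaled flow $D_{\lambda}\mathcal{F}$ is bounded independently of $\lambda$, provided $\lambda$ is large enough that the original interval $[-\lambda^{2}t_{0},0)$ contains $[-T,-1/T]$, i.e.\ $\lambda^{2}>T/t_{0}$. Thus for each such slab all but finitely many members of the family $\{D_{\lambda}\mathcal{F}\}$ are defined there, with a uniform $C^{0}$-bound on $|A|$.

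To obtain smooth convergence rather than merely continuous convergence, I would first upgrade the $C^{0}$-bound on $|A|$ to uniform bounds on all the covariant derivatives $|\nabla^{m}A|$ on slightly smaller slabs. This is the standard interior estimate for mean curvature flow: each $\nabla^{m}A$ satisfies a parabolic equation of Bernstein--Bando--Shi type whose reaction terms are polynomial in the lower-order derivatives of $A$ and in the (uniformly bounded, since $M\subset\mathbb{R}^{5+m}$ is fixed) ambient curvature, so a uniform bound on $|A|$ propagates to uniform bounds on $|\nabla^{m}A|$ for every $m$ on the interior of the slab, with constants independent of $\lambda$. Together with the fact that the graphical scale is controlled by $|A|^{-1}$, these estimates give uniform $C^{k}$-bounds on the rescaled immersions over compact subsets of space-time.

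With these uniform $C^{k}$-bounds in hand, Arzel\`a--Ascoli produces, on each slab $[-T,-1/T]$, a subsequence along which $D_{\lambda}\mathcal{F}$ converges smoothly. Exhausting $(-\infty,0)$ by the nested slabs $[-j,-1/j]$, $j=1,2,\dots$, and passing to a diagonal subsequence $\{\lambda_{i}\}$, I obtain smooth convergence $D_{\lambda_{i}}\mathcal{F}\to\mathcal{F}_{\infty}$ on every compact subset of $\mathbb{R}^{5+m}\times(-\infty,0)$, so $\mathcal{F}_{\infty}$ is the space-time track of a flow defined for all $s\in(-\infty,0)$. It remains to check that the limit genuinely solves a flow equation, and here the key observation is how the inhomogeneous terms in (\ref{6}) scale. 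The rescaled velocity is $\lambda^{-1}$ times the original velocity $\overline{H}+E-2\alpha\mathcal{T}$, so in the dilated coordinates the mean curvature contributes the rescaled $\overline{H}^{\lambda}$, while $E$ and $2\alpha\mathcal{T}$ each pick up a further factor $\lambda^{-1}$; since $|E|$ is bounded by the fixed geometry of $M\subset\mathbb{R}^{5+m}$ and $\alpha$ is controlled near $t_{0}$, both tend to $0$ uniformly on compact sets as $\lambda\to\infty$. Passing to the limit, $\mathcal{F}_{\infty}$ satisfies $\tfrac{d}{ds}\overline{F}_{\infty}=\overline{H}_{\infty}$, a smooth mean curvature flow in $\mathbb{R}^{5+m}$ existing on $(-\infty,0)$, as claimed.

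The hard part of this argument is the second step: securing the higher-order derivative bounds uniformly in the rescaling parameter, since the naive Arzel\`a--Ascoli argument only delivers $C^{0}$-convergence of $|A|$. The bootstrap through the evolution equations for $\nabla^{m}A$, carried out with all constants independent of $\lambda$, is precisely what upgrades the continuous limit to a smooth one; a secondary technical point is to confirm that the Reeb-direction correction $\alpha\mathcal{T}$ stays bounded on a full neighborhood of the singular time, which follows from its evolution equation (\ref{d}) under the type $I$ assumption.
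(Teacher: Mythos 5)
Your proposal is correct and takes essentially the same route as the paper: the paper's own justification is precisely the preceding rescaling computation $|A|^{2}(\Sigma _{s}^{\lambda })\leq -\frac{C}{s}$ from the type $I$ hypothesis, followed by an appeal to Arzel\`{a}--Ascoli to extract a smoothly convergent subsequence of $\{D_{\lambda }\mathcal{F}\}$ on compact subsets of space-time, yielding a limit flow on $(-\infty ,0)$. You simply make explicit the standard steps the paper compresses into one line --- the uniform bounds on the slabs $[-T,-1/T]$, the Bernstein--Bando--Shi-type bootstrap for $|\nabla ^{m}A|$, the diagonal argument over nested slabs, and the vanishing of the rescaled lower-order terms $\frac{E}{\lambda }$ and $2\alpha ^{\lambda }\mathcal{T}_{s}^{\lambda }$ --- all of which is consistent with the scaling the paper itself uses in the proof of Theorem \ref{T31}.
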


\begin{theorem}
\label{T31} If the singularity is of type $I$, then there exists a
subsequence $\lambda _{i}$ such that $D_{\lambda _{i}}\mathcal{F}\rightarrow 
\mathcal{F}_{\infty }$ smoothly and 
\begin{equation*}
\begin{array}{c}
H-2\alpha \mathcal{T}=\frac{1}{2s}F^{\bot }%
\end{array}%
\end{equation*}%
on $\mathcal{F}_{\infty }$ for $-\infty <s<0$. That is, it is the
self-similar solution 
\begin{equation*}
F(x,s)=\sqrt{-s}F(x,0)
\end{equation*}%
of the Legendrian mean curvature flow.
\end{theorem}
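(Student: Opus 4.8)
The plan is to run the standard Huisken--White parabolic blow-up argument in the form adapted by M.-T. Wang (\cite{w4}) to the graph setting, the only new features being the bookkeeping of the Reeb term $-2\alpha\mathcal{T}$ and of the extrinsic error $E$ coming from the isometric embedding $M\hookrightarrow\mathbb{R}^{5+m}$. First I would record the integrated consequence of the monotonicity formula (\ref{2023}). Writing $\Theta(t)=\int_{\Sigma_t}\rho_{y_0,t_0}\,d\mu_t$ and integrating (\ref{2023}) over $[t_1,t_0)$, the fact that $\Theta(t)$ has a limit as $t\to t_0$ forces the space-time soliton integral
\[
\int_{t_1}^{t_0}\int_{\Sigma_t}\Big|H-2\alpha\mathcal{T}-E+\tfrac{\overline{F}^{\bot}}{2(t_0-t)}\Big|^2\rho_{y_0,t_0}\,d\mu_t\,dt
\]
to be finite, and in particular its tail over $[t_1,t_0)$ tends to $0$ as $t_1\to t_0$. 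This finiteness is the engine of the argument.

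Next I would track how each ingredient transforms under the parabolic dilation $D_\lambda$ about $(y_0,t_0)$, with $s=\lambda^2(t-t_0)$ and $t_0-t=-s/\lambda^2$. Since $\Sigma^\lambda_s$ is a surface ($n=2$), the Gaussian weight and the area element combine into the scale-invariant measure $\rho\,d\mu$, while $H^\lambda=\lambda^{-1}H$, $E^\lambda=\lambda^{-1}E$, and $(\overline{F}-y_0)^{\bot}=\lambda^{-1}F^{\lambda,\bot}$. A direct substitution then yields
\[
H-2\alpha\mathcal{T}-E+\tfrac{(\overline{F}-y_0)^{\bot}}{2(t_0-t)}=\lambda\Big(H^\lambda-\tfrac{2\alpha}{\lambda}\mathcal{T}-E^\lambda-\tfrac{1}{2s}F^{\lambda,\bot}\Big),
\]
so that the integrand scales by $\lambda^{-2}$ and $ds=\lambda^2\,dt$; consequently the rescaled soliton integral over a fixed compact interval $[s_1,s_2]\subset(-\infty,0)$ equals the original one over the shrinking interval $[t_0+s_1/\lambda^2,\,t_0+s_2/\lambda^2]$. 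As $\lambda\to\infty$ this $t$-interval collapses to $t_0$, so the tail estimate of the previous paragraph shows the rescaled soliton integral tends to $0$.

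Finally I would pass to the limit. By the preceding Proposition the rescaled tracks $D_{\lambda_i}\mathcal{F}$ converge smoothly on compact subsets to $\mathcal{F}_\infty$; along the way $E^{\lambda_i}=\lambda_i^{-1}E\to0$ (the blown-up ambient is flat) and $\tfrac{2\alpha}{\lambda_i}\mathcal{T}\to0$, where I use that $\alpha$ stays bounded on $[0,t_0)$ by the maximum principle applied to (\ref{d}). Smooth convergence lets me carry the vanishing soliton integral to the limit, giving $\int_{s_1}^{s_2}\int_{\Sigma^\infty_s}|H_\infty-\tfrac{1}{2s}F_\infty^{\bot}|^2\rho\,d\mu\,ds=0$; since $\rho>0$ and the integrand is continuous and nonnegative it vanishes identically, i.e. $H-2\alpha\mathcal{T}=\tfrac{1}{2s}F^{\bot}$ on $\mathcal{F}_\infty$. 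Because $\mathcal{T}\perp T\Sigma$ on the Legendrian limit this velocity is purely normal, so integrating the normal ODE $\tfrac{d}{ds}F=\tfrac{1}{2s}F^{\bot}$, up to a time-dependent tangential reparametrization, produces the self-similar solution $F(x,s)=\sqrt{-s}\,F(x,0)$.

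I expect the main obstacle to be the rescaling bookkeeping of the second paragraph: matching the shrinking-target term $\tfrac{\overline{F}^{\bot}}{2(t_0-t)}$ with the self-shrinker term $\tfrac{1}{2s}F^{\lambda,\bot}$ and verifying the exact scale-invariance of $\rho\,d\mu$ in dimension $n=2$, together with the justification of the interchange of limit and integral. The latter rests on the smooth convergence of $D_{\lambda_i}\mathcal{F}$ on compact space-time regions supplied by the previous Proposition and on the uniform decay of the two error terms $E^{\lambda_i}$ and $\tfrac{\alpha}{\lambda_i}\mathcal{T}$, which is precisely what distinguishes this Legendrian/Sasakian computation from the Lagrangian prototype in \cite{w4}.
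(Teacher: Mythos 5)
Your proposal is correct and takes essentially the same approach as the paper: the rescaled monotonicity formula, the scale invariance of $\int_{\Sigma _{t}}\rho _{y_{0},t_{0}}\,d\mu _{t}$, and the existence of its limit as $t\rightarrow t_{0}$ force the rescaled soliton integral to vanish, after which the smooth subsequential convergence supplied by the preceding proposition yields the soliton equation on $\mathcal{F}_{\infty }$ (with $E/\lambda _{i}$ and $2\alpha \mathcal{T}/\lambda _{i}$ disappearing in the limit, exactly as in the paper's dilation bookkeeping). Your phrasing of the vanishing via the tail of the finite unscaled space-time integral is merely an equivalent restatement of the paper's step of integrating the rescaled inequality over $[s_{0}-\tau ,s_{0}]$ and cancelling the boundary terms $\int_{\Sigma _{s_{0}}^{\lambda }}\rho _{0,0}\,d\mu _{s_{0}}^{\lambda }$ and $\int_{\Sigma _{s_{0}-\tau }^{\lambda }}\rho _{0,0}\,d\mu _{s_{0}-\tau }^{\lambda }$ against the common limit of the Gaussian density.
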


\begin{proof}
After the parabolic dilation, the monotonicity formula (\ref{2023}) becomes 
\begin{equation*}
\begin{array}{l}
\frac{d}{ds}\int_{\Sigma _{s}^{\lambda }}\rho _{0,0}d\mu _{s}^{\lambda }\leq 
\frac{C}{\lambda ^{2}}-\frac{1}{2}\int_{\Sigma _{s}^{\lambda
}}|H_{s}^{\lambda }-2\alpha ^{\lambda }\mathcal{T}_{s}^{\lambda }-\frac{E}{%
\lambda }-\frac{(\overline{F}_{s}^{\lambda })^{\bot }}{2s}|^{2}\rho
_{0,0}d\mu _{s}^{\lambda },%
\end{array}%
\end{equation*}%
here $\overline{F}_{s}^{\lambda }(x,s)=\lambda (\overline{F}_{t}(x)-y_{0}),$ 
$\alpha ^{\lambda }(\overline{F}_{s}^{\lambda }(x))=\alpha (\overline{F}%
_{t}(x))$, $\mathcal{T}_{s}^{\lambda }=\frac{1}{\lambda }\mathcal{T},$ 
\begin{equation*}
\begin{array}{c}
\rho _{0,0}(\overline{F}_{s}^{\lambda }(x,s))=\rho _{0,0}(\lambda (\overline{%
F}_{t}(x)-y_{0}),\lambda ^{2}(t-t_{0}))=\lambda ^{-n}\rho _{y_{0},t_{0}}(%
\overline{F}_{t}(x),t)%
\end{array}%
\end{equation*}%
and $d\mu _{s}^{\lambda }=\lambda ^{n}d\mu _{t}$ is the volume form of the $%
\Sigma _{s}^{\lambda }.$ Thus $\int_{\Sigma _{t}}\rho _{y_{0},t_{0}}d\mu
_{t} $ is invariant under the parabolic dilation, that is%
\begin{equation*}
\begin{array}{c}
\int_{\Sigma _{t}}\rho _{y_{0},t_{0}}d\mu _{t}=\int_{\Sigma _{s}^{\lambda
}}\rho _{0,0}d\mu _{s}^{\lambda }.%
\end{array}%
\end{equation*}%
Consider the $s_{0}<0$ slice and integrate both sides from $s_{0}-\tau $ to $%
s_{0}$ for $\tau >0$ and $\lambda $ large: 
\begin{equation*}
\begin{array}{l}
\int_{s_{0}-\tau }^{s_{0}}\int_{\Sigma _{s}^{\lambda }}|H_{s}^{\lambda
}-2\alpha ^{\lambda }\mathcal{T}_{s}^{\lambda }-\frac{E}{\lambda }-\frac{(%
\overline{F}_{s}^{\lambda })^{\bot }}{2s}|^{2}\rho _{0,0}d\mu _{s}^{\lambda
}ds \\ 
\leq \frac{2C}{\lambda ^{2}}-2\int_{\Sigma _{s_{0}}^{\lambda }}\rho
_{0,0}d\mu _{s_{0}}^{\lambda }+2\int_{\Sigma _{s_{0}-\tau }^{\lambda }}\rho
_{0,0}d\mu _{s_{0}-\tau }^{\lambda }.%
\end{array}%
\end{equation*}%
Taking $\lambda \rightarrow \infty $. Since $\int_{\Sigma _{s}^{\lambda
}}\rho _{0,0}d\mu _{s}^{\lambda }=\int_{\Sigma _{t}}\rho _{y_{0},t_{0}}d\mu
_{t}$ for $t=t_{0}+\frac{s}{\lambda ^{2}},$ 
\begin{equation*}
\begin{array}{c}
\underset{\lambda \rightarrow \infty }{\lim }\int_{\Sigma _{s_{0}}^{\lambda
}}\rho _{0,0}d\mu _{s_{0}}^{\lambda }=\underset{t\rightarrow t_{0}}{\lim }%
\int_{\Sigma _{t}}\rho _{y_{0},t_{0}}d\mu _{t}=\underset{\lambda \rightarrow
\infty }{\lim }\int_{\Sigma _{s_{0}-\tau }^{\lambda }}\rho _{0,0}d\mu
_{s_{0}-\tau }^{\lambda }.%
\end{array}%
\end{equation*}%
Thus, 
\begin{equation*}
\begin{array}{c}
\underset{\lambda \rightarrow \infty }{\lim }\int_{s_{0}-\tau
}^{s_{0}}\int_{\Sigma _{s}^{\lambda }}|H_{s}^{\lambda }-2\alpha ^{\lambda }%
\mathcal{T}_{s}^{\lambda }-\frac{E}{\lambda }-\frac{(\overline{F}%
_{s}^{\lambda })^{\bot }}{2s}|^{2}\rho _{0,0}d\mu _{s}^{\lambda }ds=0.%
\end{array}%
\end{equation*}%
Therefore if $t_{0}$ is a type $I$ singularity, a subsequence 
\begin{equation*}
D_{\lambda _{i}}\mathcal{F}\rightarrow \mathcal{F}_{\infty }
\end{equation*}%
smoothly and 
\begin{equation*}
\begin{array}{c}
H-2\alpha \mathcal{T}=\frac{1}{2s}F^{\bot }%
\end{array}%
\end{equation*}%
on $\mathcal{F}_{\infty }$ for $s\in (s_{0}-\tau ,s_{0})$. Taking $%
s_{0}\rightarrow 0$ and $\tau \rightarrow \infty $, we obtain the desired
result.
\end{proof}

\begin{remark}
\label{r31} On the other hand, if we assume that $\Sigma _{t}=\lambda
(t)\Sigma _{t_{1}},$ $\lambda (t_{1})=1,$ then 
\begin{equation*}
\begin{array}{c}
\lambda ^{\prime }(t)F(x,t_{1})=\frac{\partial }{\partial t}F(x,t)=H-2\alpha 
\mathcal{T}=\frac{1}{\lambda (t)}F(x,t_{1}).%
\end{array}%
\end{equation*}%
It implies that 
\begin{equation*}
\begin{array}{c}
\lambda (t)\lambda ^{\prime }(t)=\frac{1}{2}c%
\end{array}%
\end{equation*}%
and 
\begin{equation*}
\begin{array}{c}
\lambda (t)=\sqrt{1+c(t-t_{1})}.%
\end{array}%
\end{equation*}%
Thus 
\begin{equation*}
\begin{array}{c}
H-2\alpha \mathcal{T}=\frac{c}{2\lambda (t)^{2}}F(x,t).%
\end{array}%
\end{equation*}%
Assume that $\lambda (t_{0})=0$ as $\Sigma _{t}\rightarrow p,$ $t<t_{0}.$
Then, for $c=\frac{1}{t_{1}-t_{0}}$ 
\begin{equation*}
\begin{array}{c}
\lambda (t)=\sqrt{\frac{t_{0}-t}{t_{0}-t_{1}}}.%
\end{array}%
\end{equation*}%
One can derive that 
\begin{equation*}
\begin{array}{c}
H-2\alpha \mathcal{T}=\frac{1}{2(t-t_{0})}F(x,t),\text{ }t<t_{0}.%
\end{array}%
\end{equation*}
\end{remark}

Follows from Theorem \ref{T31} and Remark \ref{r31}, one can define

\begin{definition}
Let $F:\Sigma ^{n}\rightarrow M^{2n+1}$ be an $n$-dimensional Legendrian
submanifold in a Sasakian manifold. We call the Legendrian immersed manifold 
$\Sigma ^{n}$ a self-shrinker if it satisfies the quasilinear elliptic
system:%
\begin{equation*}
\begin{array}{c}
H\mathbf{-}2\alpha \mathcal{T}\mathbf{=-}F^{\perp },%
\end{array}%
\end{equation*}%
where $H$ is the Legendrian mean curvature vector with $H=-\nabla ^{k}\alpha
v_{k}$ and $F^{\perp }$ denotes the projection onto the normal bundle $\Phi
T\Sigma \oplus \mathbb{R}\mathcal{T}$ of $\Sigma ^{n}.$
\end{definition}

In the upcoming project, we will address the rigidity theorem and
classification of $2$-dimensional Legendrian self-shrinkers in $\mathbb{R}%
^{5}$ with the standard contact sructure. We should be able to reconstruct
the Harvey-Lawson special Lagrangian cone in $\mathbb{C}^{3}$ from the
Legendrian self-shrinker in $\mathbb{R}^{5}$. The partial classification is
also provided if the Legendrian angle is harmonic, and in particular if the
squared norm of the second fundamental form is constant.

\begin{proposition}
\label{Prop1}If $F:\Sigma ^{2}\times \lbrack 0,t_{0})\rightarrow M=\Sigma
^{2}\times \Xi ^{3}\hookrightarrow \mathbb{R}^{5+m}$ is a Legendrian mean
curvature flow of an orientable Legendrian surface in a $5$-dimensional
Sasaki-Einstein manifold $M$. Assume the second fundamental form of $%
M\hookrightarrow \mathbb{R}^{5+m}$ is bounded. Let $\overline{\omega }$ be a
parallel transverse K\"{a}hler form on $M.$ If there exists $\delta >0$ such
that $\Omega _{t}=\ast \omega _{t}>\delta $ on $F_{t}(\Sigma )$ for $t\in
\lbrack 0,t_{0})$ and $t_{0}$ is a type $I$ singularity, then $F$ can be
extended to $\Sigma \times \lbrack 0,t_{0}+\varepsilon )$ for some $%
\varepsilon >0$.

\begin{proof}
For $y_{0}\in \mathbb{R}^{5+m}$, suppose $(y_{0},t_{0})$ is a singularity of
the Legendrian mean curvature flow. Recalling equations (\ref{5}) and (\ref%
{7}), we derive 
\begin{equation*}
\begin{array}{ll}
& \frac{d}{dt}\int_{\Sigma _{t}}(1-\Omega _{t})\rho _{y_{0},t_{0}}d\mu _{t}
\\ 
= & \int_{\Sigma _{t}}-[\Delta \Omega _{t}+(2|A|^{2}-|H|^{2})\Omega
_{t}+c(1-\Omega _{t}^{2})\Omega _{t}+(1-\Omega _{t})\left\vert H\right\vert
^{2}]\rho _{y_{0},t_{0}}d\mu _{t} \\ 
& +\int_{\Sigma _{t}}(1-\Omega _{t})[-\Delta \rho _{y_{0},t_{0}}-\rho
_{y_{0},t_{0}}[\frac{|\overline{F}^{\bot }|^{2}}{4(t_{0}-t)^{2}}+\frac{1}{%
(t_{0}-t)}\langle \overline{F}^{\bot },H-\alpha \mathcal{T}-\frac{1}{2}%
E\rangle ]d\mu _{t}%
\end{array}%
\end{equation*}%
Integrating by parts, we obtain%
\begin{equation*}
\begin{array}{ll}
& \frac{d}{dt}\int_{\Sigma _{t}}(1-\Omega _{t})\rho _{y_{0},t_{0}}d\mu _{t}
\\ 
= & -\int_{\Sigma _{t}}\Omega _{t}[2|A|^{2}-|H|^{2}+c(1-\Omega
_{t}^{2})]\rho _{y_{0},t_{0}}d\mu _{t} \\ 
& -\int_{\Sigma _{t}}(1-\Omega _{t})[|H-\alpha \mathcal{T}-\frac{E}{2}+\frac{%
\overline{F}^{\bot }}{2(t_{0}-t)}|^{2}-\alpha ^{2}-\frac{1}{4}|E|^{2}]\rho
_{y_{0},t_{0}}d\mu _{t}.%
\end{array}%
\end{equation*}%
Since $|H|^{2}\leq \frac{4}{3}|A|^{2},$ $0\leq 1-\Omega _{t}\leq 1,$ and
that $\mathrm{lim}_{t\rightarrow t_{0}}\int_{\Sigma _{t}}\rho
_{y_{0},t_{0}}d\mu _{t}$ exists, one yields%
\begin{equation*}
\begin{array}{c}
\frac{d}{dt}\int_{\Sigma _{t}}(1-\Omega _{t})\rho _{y_{0},t_{0}}d\mu
_{t}\leq C-\frac{2}{3}\int_{\Sigma _{t}}\Omega _{t}|A|^{2}\rho
_{y_{0},t_{0}}d\mu _{t}%
\end{array}%
\end{equation*}%
for some positive constant $C=C(\alpha (t_{0}),E(t_{0}))$. Moreover, for any 
$t<\infty $, one know that $\eta >\delta >0$, thus%
\begin{equation}
\begin{array}{c}
\frac{d}{dt}\int_{\Sigma _{t}}(1-\Omega _{t})\rho _{y_{0},t_{0}}d\mu
_{t}\leq C-C(\delta )\int_{\Sigma _{t}}|A|^{2}\rho _{y_{0},t_{0}}d\mu _{t}%
\end{array}
\label{9}
\end{equation}%
for all $t\in \lbrack 0,t_{0}).$ We deduce that $\mathrm{lim}_{t\rightarrow
t_{0}}\int_{\Sigma _{t}}(1-\Omega _{t})\rho _{y_{0},t_{0}}d\mu _{t}$ exists.
Consider the parabolic dilation of scale $\lambda >0$ at $(y_{0},t_{0})$: 
\begin{equation*}
\begin{array}{cccc}
D_{\lambda }: & \mathbb{R}^{5+m}\times \lbrack 0,t_{0}) & \rightarrow & 
\mathbb{R}^{5+m}\times \lbrack -\lambda ^{2}t_{0},0) \\ 
& (y,t) & \mapsto & (\lambda (y-y_{0}),\lambda ^{2}(t-t_{0})).%
\end{array}%
\end{equation*}%
Since $\Omega _{t}$ is invariant under $D_{\lambda }$. The integral
inequality (\ref{9}) on $\Sigma _{s}^{\lambda }$ will satisfy%
\begin{equation*}
\begin{array}{c}
\frac{d}{ds}\int_{\Sigma _{s}^{\lambda }}(1-\Omega _{t})\rho _{0,0}d\mu
_{s}^{\lambda }\leq \frac{C}{\lambda ^{2}}-C(\delta )\int_{\Sigma
_{s}^{\lambda }}|A|^{2}\rho _{0,0}d\mu _{s}^{\lambda }.%
\end{array}%
\end{equation*}%
We fix $s_{0}<0$, $\tau >0,$ for $\lambda $ large, we integrate both sides
of the above inequality over the interval $[s_{0}-\tau ,s_{0}]$ to get 
\begin{equation*}
\begin{array}{c}
\int_{s_{0}-\tau }^{s_{0}}\int_{\Sigma _{s}^{\lambda }}|A|^{2}\rho
_{0,0}d\mu _{s}^{\lambda }ds\leq \frac{C_{1}}{\lambda ^{2}}%
+C_{2}\int_{\Sigma _{s_{0}-\tau }^{\lambda }}(1-\Omega _{t})\rho _{0,0}d\mu
_{s_{0}-\tau }^{\lambda }-C_{2}\int_{\Sigma _{s_{0}}^{\lambda }}(1-\Omega
_{t})\rho _{0,0}d\mu _{s_{0}}^{\lambda }.%
\end{array}%
\end{equation*}%
We know that $\mathrm{lim}_{t\rightarrow t_{0}}\int_{\Sigma _{t}}(1-\Omega
_{t})\rho _{y_{0},t_{0}}d\mu _{t}$ exists, we can pick a sequence $\lambda
_{i}\rightarrow \infty $ such that $\Sigma _{s}^{\lambda _{i}}\rightarrow
\Sigma _{s}^{\infty }$ for all $s\in (-\infty ,0)$. In fact, we have 
\begin{equation*}
\begin{array}{c}
\int_{s_{0}-\tau }^{s_{0}}\int_{\Sigma _{s}^{\lambda }}|A|^{2}\rho
_{0,0}d\mu _{s}^{\lambda }ds\leq C(i)%
\end{array}%
\end{equation*}%
where $C(i)\rightarrow 0$ as $i\rightarrow \infty $. We first choose $\tau
_{i}\rightarrow 0$ such that $\frac{C(i)}{\tau _{i}}\rightarrow 0$, and then
choose $s_{i}\in \lbrack s_{0}-\tau _{i},s_{0}]$ so that 
\begin{equation*}
\begin{array}{c}
\int_{\Sigma _{s_{i}}^{\lambda _{i}}}|A|^{2}\rho _{0,0}d\mu
_{s_{i}}^{\lambda _{i}}\leq \frac{C(i)}{\tau _{i}}.%
\end{array}%
\end{equation*}%
Suppose $\Sigma _{s_{i}}^{\lambda _{i}}$ is given by the immersion $%
\overline{F}_{s_{i}}^{\lambda _{i}}:\Sigma \rightarrow \mathbb{R}^{5+m},$
then%
\begin{equation*}
\begin{array}{c}
\rho _{0,0}(F_{s_{i}}^{\lambda _{i}})=\frac{1}{4\pi (-s_{i})}\exp (-\frac{|%
\overline{F}_{s_{i}}^{\lambda _{i}}|^{2}}{4\pi (-s_{i})}).%
\end{array}%
\end{equation*}%
Then for any $R>0,$ 
\begin{equation*}
\begin{array}{c}
\int_{\Sigma _{s_{i}}^{\lambda _{i}}}|A|^{2}\rho _{0,0}d\mu
_{s_{i}}^{\lambda _{i}}\geq \int_{\Sigma _{s_{i}}^{\lambda _{i}}\cap
B_{R}(0)}|A|^{2}\rho _{0,0}d\mu _{s_{i}}^{\lambda _{i}}\geq C\exp (-\frac{R}{%
2})\int_{\Sigma _{s_{i}}^{\lambda _{i}}\cap B_{R}(0)}|A|^{2}d\mu
_{s_{i}}^{\lambda _{i}}.%
\end{array}%
\end{equation*}%
Hence, for any compact set $K\subset \mathbb{R}^{5+m},$ 
\begin{equation*}
\begin{array}{c}
\mathrm{lim}_{i\rightarrow \infty }\int_{\Sigma _{s_{i}}^{\lambda _{i}}\cap
K}|A|^{2}d\mu _{s_{i}}^{\lambda _{i}}=0.%
\end{array}%
\end{equation*}%
We can take a coordinate neighborhood $U$ of $\pi (y_{0})\in \Sigma ,$ where 
$\pi :M\rightarrow \Sigma $ is the projection map. Let $\Sigma
_{s_{i}}^{\lambda _{i}}$ be the graph of $\widetilde{u}_{i}:\lambda
_{i}U\rightarrow \lambda _{i}\Xi $. Since $\eta $ is bounded and $%
\int_{\Sigma _{s_{i}}^{\lambda _{i}}\cap K}|A|^{2}d\mu _{s_{i}}^{\lambda
_{i}}\rightarrow 0$, thus $|D\widetilde{u}_{i}|\leq C$ and $\int_{\Omega
}|D^{2}\widetilde{u}_{i}|^{2}\rightarrow 0.$ Hence $\widetilde{u}%
_{i}\rightarrow \widetilde{u}_{\infty }$ in $C^{\alpha }\cap W^{1,2}$ and $%
\widetilde{u}_{\infty }$ is the entire graph over $\mathbb{R}^{2}.$ This
implies $\Sigma _{s_{i}}^{\lambda _{i}}\rightarrow \Sigma _{s_{0}}^{\infty }$
as Radon measure and $\Sigma _{s_{0}}^{\infty }$ is the graph of a linear
function. Therefore 
\begin{equation*}
\begin{array}{c}
\lim_{i\rightarrow \infty }\int_{\Sigma _{s_{i}}^{\lambda _{i}}}\rho
_{0,0}d\mu _{s_{i}}^{\lambda _{i}}=\int_{\Sigma _{s_{0}}^{\infty }}\rho
_{0,0}d\mu _{s_{0}}^{\infty }=1.%
\end{array}%
\end{equation*}%
Thus one found a sequence such that 
\begin{equation*}
\begin{array}{c}
\lim_{t_{i}\rightarrow t_{0}}\int_{\Sigma _{t_{i}}}\rho _{y_{0},t_{0}}d\mu
_{t_{i}}=1.%
\end{array}%
\end{equation*}%
By White's regularity theorem, the second fundamental form $|A|^{2}$ is
bounded as $t\rightarrow t_{0}$ and thus $(y_{0},t_{0})$ is a regular point.
\end{proof}
\end{proposition}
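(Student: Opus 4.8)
The plan is to prove this as a non-existence statement for type $I$ singularities under the graphical positivity hypothesis, and to do so through B. White's local regularity theorem. The governing principle is that it suffices to show the Gaussian density $\lim_{t\to t_0}\int_{\Sigma_t}\rho_{y_0,t_0}\,d\mu_t$ equals $1$ at the candidate singular point $(y_0,t_0)$: density one forces a uniform bound on $|A|^2$ near $(y_0,t_0)$, hence $(y_0,t_0)$ is a regular point and the flow extends smoothly to some $[0,t_0+\varepsilon)$. So the entire argument is organized around driving the density down to exactly $1$.

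First I would introduce the quantity that measures the failure of $\Sigma_t$ to be a graph, namely $\int_{\Sigma_t}(1-\Omega_t)\rho_{y_0,t_0}\,d\mu_t$, and differentiate it in $t$. Substituting the evolution equation (\ref{5}) for $\Omega_t$ together with the backward-heat-kernel identity (\ref{7}), and integrating by parts to cancel the $\Delta\Omega_t$ and $\Delta\rho_{y_0,t_0}$ terms, yields a differential inequality. The two structural inputs here are decisive: the surface bound $|H|^2\le\frac{4}{3}|A|^2$, valid because the Legendrian second fundamental form $h_{kij}$ is fully symmetric in dimension two, and the hypothesis $\Omega_t>\delta$. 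Together they give
\[
\frac{d}{dt}\int_{\Sigma_t}(1-\Omega_t)\rho_{y_0,t_0}\,d\mu_t \le C - C(\delta)\int_{\Sigma_t}|A|^2\rho_{y_0,t_0}\,d\mu_t .
\]
Because the monotonicity formula (\ref{2023}) already guarantees that $\lim_{t\to t_0}\int_{\Sigma_t}\rho_{y_0,t_0}\,d\mu_t$ exists and $0\le 1-\Omega_t\le 1$, this inequality shows both that $\lim_{t\to t_0}\int_{\Sigma_t}(1-\Omega_t)\rho_{y_0,t_0}\,d\mu_t$ exists and, crucially, that the time-integral of the weighted total curvature $\int_{\Sigma_t}|A|^2\rho_{y_0,t_0}\,d\mu_t$ is finite as $t\to t_0$.

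The second stage is a blow-up at $(y_0,t_0)$. Since $\Omega_t$ is invariant under parabolic dilation, the displayed inequality transfers to the rescaled flows $\Sigma_s^\lambda$ with an extra $C/\lambda^2$ on the right-hand side. Integrating over a short interval $[s_0-\tau,s_0]$ and letting $\lambda\to\infty$ forces the time-integrated rescaled curvature to vanish; a mean-value selection of $\lambda_i\to\infty$, $\tau_i\to 0$ and $s_i\in[s_0-\tau_i,s_0]$ then produces $\int_{\Sigma_{s_i}^{\lambda_i}}|A|^2\rho_{0,0}\,d\mu\to 0$, and after removing the Gaussian weight on compact sets, $\int_{\Sigma_{s_i}^{\lambda_i}\cap K}|A|^2\,d\mu\to 0$. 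Representing the rescaled surface as a graph $\widetilde u_i$ over a coordinate chart — which is possible precisely because $\Omega_t>\delta$ keeps it graphical and $|D\widetilde u_i|$ bounded — the vanishing of $\int|D^2\widetilde u_i|^2$ yields $C^\alpha\cap W^{1,2}$ convergence to an \emph{entire linear} graph $\widetilde u_\infty$. Thus the tangent flow is a static plane through the origin, whose Gaussian density is exactly $1$, so $\lim_i\int_{\Sigma_{s_i}^{\lambda_i}}\rho_{0,0}\,d\mu=1$, and by scale invariance $\lim_{t_i\to t_0}\int_{\Sigma_{t_i}}\rho_{y_0,t_0}\,d\mu_{t_i}=1$.

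Invoking White's regularity theorem with this density-one conclusion shows that $|A|^2$ remains bounded as $t\to t_0$, so the flow extends smoothly past $t_0$, which is the assertion. I expect the main obstacle to lie in the blow-up step: converting the decay of the \emph{Gaussian-weighted} curvature into genuine local $W^{2,2}$ decay, and obtaining a \emph{uniform} graph representation of the rescaled surfaces, is delicate because one must control the rescaled Reeb direction $\mathcal{T}_s^\lambda=\frac{1}{\lambda}\mathcal{T}$ and the extrinsic error $E/\lambda$ uniformly as $\lambda\to\infty$. The positivity $\Omega_t>\delta$ is exactly the ingredient that prevents the limit from degenerating and forces it to be a flat Legendrian plane rather than a nontrivial self-shrinker of the type produced in Theorem~\ref{T31}.
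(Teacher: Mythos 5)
Your proposal reproduces the paper's own proof essentially step for step: the same weighted quantity $\int_{\Sigma_t}(1-\Omega_t)\rho_{y_0,t_0}\,d\mu_t$ differentiated via equations (\ref{5}) and (\ref{7}), the same inequality using $|H|^2\le\frac{4}{3}|A|^2$ and $\Omega_t>\delta$, the same parabolic dilation with mean-value selection of $\lambda_i$, $\tau_i$, $s_i$, the same graphical limit $\widetilde{u}_\infty$ forcing Gaussian density $1$, and White's regularity theorem to conclude extension past $t_0$. There is no substantive deviation; your closing observations about the delicate points (uniform graph representation, control of $\mathcal{T}_s^\lambda$ and $E/\lambda$) are accurate commentary on the same argument rather than an alternative route.
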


\begin{theorem}
\label{Thm0}Let\ $M$ be an oriented $5$-dimensional Sasaki-Einstein manifold
with a parallel transverse K\"{a}hler form $\overline{\omega }.$ If $\Sigma $
is a compact oriented Legendrian surface immersed in $M$ such that $\ast
\omega >0$ on $\Sigma $ with $\omega =F^{\ast }(\overline{\omega })$. Then
the Legendrian mean curvature flow of $\Sigma $ exists smoothly for all time.

\begin{proof}
We know that $\ast \omega $ have a positive lower bound for any finite time
by equation (\ref{5}), therefore the assumption in Proposition \ref{Prop1}
is satisfied and we have the desired result.
\end{proof}
\end{theorem}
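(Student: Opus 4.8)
The plan is to prove Theorem \ref{Thm0} by showing that the only way the Legendrian mean curvature flow can fail to exist for all time is through a finite-time singularity, and then ruling out the simplest type of singularity using the machinery already developed. First I would observe that short-time existence is standard for the quasilinear parabolic system \eqref{c}, so there is a maximal time $T$ of smooth existence. The standard extension criterion, recalled in the excerpt just before the definition of type $I$ singularities, states that the flow extends past $t_0$ precisely when $\sup_{\Sigma_t}|A|^2$ stays bounded as $t\to t_0$. Hence the goal reduces to showing $\sup_{\Sigma_t}|A|^2$ cannot blow up at any finite $t_0$.

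The key input I would use is the evolution equation \eqref{5} for $\Omega_t=\ast_t\omega_t$ together with the maximum principle and the ODE comparison carried out right after the Proposition in Section~2. Since $M$ is Sasaki-Einstein with $\mathrm{Ric}^T(\Phi e_1,e_2)=c\,\Omega_t$ and $c=+1$, that comparison yields the explicit lower bound
\begin{equation*}
\Omega_t(x,t)\geq \frac{\gamma e^{ct}}{\sqrt{1+\gamma^2 e^{2ct}}},
\end{equation*}
so that on any finite time interval $[0,t_0)$ the quantity $\Omega_t=\ast\omega$ has a strictly positive lower bound $\delta=\delta(t_0)>0$. This is exactly the hypothesis $\Omega_t>\delta$ required to invoke Proposition \ref{Prop1}.

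With the uniform lower bound $\Omega_t>\delta$ in hand, I would argue by contradiction: suppose $T<\infty$ is the first singular time. By the classification of singularities, the singularity is either type $I$ or type $II$. The main obstacle—and the heart of the argument—is handling the type $I$ case, but this is precisely what Proposition \ref{Prop1} disposes of: under the standing assumptions (orientable Legendrian surface, $M$ Sasaki-Einstein with bounded second fundamental form of $M\hookrightarrow\mathbb{R}^{5+m}$, parallel transverse K\"ahler form, and $\Omega_t>\delta$), a type $I$ singularity at $t_0$ can be extended past $t_0$, contradicting the maximality of $T$. The remaining type $II$ case is subsumed once one notes that the blow-up analysis of Proposition \ref{Prop1}, via the density bound $\lim_{t_i\to t_0}\int_{\Sigma_{t_i}}\rho_{y_0,t_0}\,d\mu_{t_i}=1$ and White's regularity theorem, forces $|A|^2$ to remain bounded, so $(y_0,t_0)$ is a regular point regardless of the singularity classification; no genuine singularity can form. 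Therefore $T=\infty$ and the flow is smooth for all time.

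In short, the proof is an assembly step: the positivity preservation from \eqref{5} supplies the essential lower bound $\Omega_t>\delta$ on every finite interval, which activates Proposition \ref{Prop1}, and the latter's blow-up and regularity argument rules out singularity formation. The only point requiring care is verifying that the lower bound $\delta$ genuinely depends only on $t_0$ and $\min_{\Sigma_0}\Omega_0$ and does not degenerate as $t\to t_0$ for finite $t_0$, which follows immediately from the explicit comparison function above since $\gamma e^{ct}/\sqrt{1+\gamma^2 e^{2ct}}$ is bounded away from zero on compact time intervals.
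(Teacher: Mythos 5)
Your proposal is correct and takes essentially the same route as the paper: the ODE comparison applied to equation (\ref{5}) gives a positive lower bound $\delta(t_{0})>0$ for $\Omega _{t}$ on any finite time interval, which activates Proposition \ref{Prop1} and rules out singularity formation at any finite $t_{0}$. You are in fact slightly more careful than the paper's one-line proof: Proposition \ref{Prop1} is stated only for type $I$ singularities, yet the paper invokes it for an arbitrary finite-time singularity without comment, and your observation that the core of its proof --- the vanishing of $\int \int |A|^{2}\rho \,d\mu \,ds$ from the monotone quantity, the graphical $C^{\alpha }\cap W^{1,2}$ convergence, the density bound $\lim_{t_{i}\rightarrow t_{0}}\int_{\Sigma _{t_{i}}}\rho _{y_{0},t_{0}}d\mu _{t_{i}}=1$, and White's regularity theorem --- never actually uses the type $I$ hypothesis is precisely what closes that gap.
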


\section{Asymptotic Convergence at Infinity}

In the final section, we study the convergence of the Legendrian mean
curvature flow at infinity for the Legendrian surface in a $5$-dimensional
Sasaki-Einstein manifold. The key point is to show the uniform boundedness
of $|A|^{2}$ in space and time when $\Omega _{t}=\ast \omega $ is close to $%
1 $ as the following proposition.

\begin{proposition}
\label{prop}Let $M$ be a compact $5$-dimensional Sasaki-Einstein manifold
and $\Sigma $ be an oriented immersed Legendrian surface in $M$. Let $%
\overline{\omega }$ be a parallel transverse K\"{a}hler form on $M$ and $%
\omega =F_{t}^{\ast }(\overline{\omega })$ be the pull-back of $\overline{%
\omega }$ on $\Sigma _{t}.$ Suppose there exists a constant $\epsilon $ with 
$0<\epsilon <1$ such that if $\Omega _{t}=\ast \omega >1-\epsilon $ on $%
\Sigma _{t}$ for $t\in \lbrack 0,T],$ then the norm of the second
fundamental form on $\Sigma _{t}$ is uniformly bounded in $[0,T].$
\end{proposition}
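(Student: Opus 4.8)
The plan is to combine the evolution equation \eqref{5} for $\Omega_t$ with a Simons-type parabolic inequality for $|A|^2$, and then to run a maximum-principle argument on a suitable quotient. Write $b=1-\epsilon$, so that the hypothesis reads $q:=\Omega_t-b>0$ on $[0,T]$; since $\Phi$ is an isometry on $\ker\eta$ we have $\Omega_t=\langle\Phi e_1,e_2\rangle\le 1$, hence $0<q\le\epsilon$. The first step is to record the parabolic inequality for the second fundamental form. Using the Gauss and Codazzi equations for the Legendrian immersion, the compatibility of $\Phi$ with $g$, and the boundedness of the curvature of the compact ambient manifold $M$, a standard Simons-type computation (now including the extra $-2\alpha\mathcal{T}$ term of the flow) yields
\begin{equation*}
(\tfrac{d}{dt}-\Delta)|A|^2\le -2|\nabla A|^2+C_1|A|^4+C_2|A|^2,
\end{equation*}
for constants $C_1,C_2$ depending only on $M$ and on the bounded geometry of the flow. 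Deriving this inequality carefully in the Sasakian/Legendrian setting, with the correct contact-curvature contributions and the contribution of the $\alpha\mathcal{T}$ drift, is the technically heaviest step.

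The second step exploits the good sign hidden in \eqref{5}. For the Sasaki-Einstein case $Ric^T(\Phi e_1,e_2)=c\,\Omega_t$ with $c=1$, equation \eqref{5} reads $(\tfrac{d}{dt}-\Delta)\Omega_t=(2|A|^2-|H|^2)\Omega_t+c(1-\Omega_t^2)\Omega_t$. Using the Legendrian bound $|H|^2\le\tfrac43|A|^2$ already recorded in the excerpt, together with $c(1-\Omega_t^2)\Omega_t\ge 0$ for $\Omega_t\in(0,1]$ and $c>0$, we obtain
\begin{equation*}
(\tfrac{d}{dt}-\Delta)q=(\tfrac{d}{dt}-\Delta)\Omega_t\ge\tfrac23|A|^2\Omega_t\ge\tfrac23 b\,|A|^2 .
\end{equation*}
This is the mechanism that will let $\Omega_t$ absorb the dangerous quartic term in $|A|^2$.

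Next I would study $f:=|A|^2/q$, which is well defined and positive by the hypothesis. Applying the quotient rule for the heat operator, $(\tfrac{d}{dt}-\Delta)f=\tfrac1q(\tfrac{d}{dt}-\Delta)|A|^2-\tfrac{f}{q}(\tfrac{d}{dt}-\Delta)q+\tfrac2q\nabla f\cdot\nabla q$, and inserting the two inequalities above (discarding the favorable $-2|\nabla A|^2$), one finds, after using $|A|^2=fq$ so that $C_1|A|^4/q=C_1 f^2 q$ and $(f/q)\cdot\tfrac23 b|A|^2=\tfrac23 b f^2$,
\begin{equation*}
(\tfrac{d}{dt}-\Delta)f\le f^2\big(C_1 q-\tfrac23 b\big)+C_2 f+\tfrac2q\nabla f\cdot\nabla q .
\end{equation*}
Since $q\le\epsilon$ and $b=1-\epsilon$, the quartic coefficient satisfies $C_1 q-\tfrac23 b\le C_1\epsilon-\tfrac23(1-\epsilon)$, which is $\le-\delta_0<0$ provided $\epsilon$ is chosen small enough that $\epsilon\big(C_1+\tfrac23\big)<\tfrac23$; this is precisely how the constant $\epsilon$ in the statement is fixed.

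Finally I would apply the maximum principle on the compact surface $\Sigma_t$. At a spatial maximum $x_t$ of $f(\cdot,t)$ one has $\nabla f=0$, which kills the transport term, and $\Delta f\le 0$, so that $\tfrac{d}{dt}f_{\max}\le(\tfrac{d}{dt}-\Delta)f\le-\delta_0 f_{\max}^2+C_2 f_{\max}$. This Riccati-type differential inequality with negative leading coefficient forces $f_{\max}(t)\le\max\{f_{\max}(0),\,C_2/\delta_0\}$ for all $t\in[0,T]$, the initial value $f_{\max}(0)$ being finite because $\Sigma_0$ is a smooth compact surface with $q>0$. Consequently $|A|^2=f\,q\le\epsilon\,f_{\max}$ is uniformly bounded on $[0,T]$, as desired. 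The main obstacle is the first step, namely the clean derivation of the Simons inequality in the Legendrian contact setting with the $-2\alpha\mathcal{T}$ term, together with the bookkeeping needed to ensure that every gradient cross-term is either annihilated at the maximum point or absorbed by the discarded $-2|\nabla A|^2$.
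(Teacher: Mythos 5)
Your overall scheme --- a Simons-type inequality for $|A|^{2}$, the lower bound $(\frac{d}{dt}-\Delta )\Omega _{t}\geq \frac{2}{3}|A|^{2}\Omega _{t}$ of (\ref{12}), a quotient quantity, and the parabolic maximum principle with a Riccati comparison --- is exactly the paper's strategy; the only structural difference is your denominator $q=\Omega _{t}-(1-\epsilon )$ in place of the paper's $\Omega _{t}^{6}$. The gap is in your first step: the Simons inequality you assert, $(\frac{d}{dt}-\Delta )|A|^{2}\leq -2|\nabla A|^{2}+C_{1}|A|^{4}+C_{2}|A|^{2}$, omits the zeroth-order term. The inequality the paper quotes from \cite{chw} is (\ref{11}), which carries an additive constant $K_{2}$ coming from the covariant derivatives of the ambient curvature tensor; a compact Sasaki--Einstein $5$-manifold need not be locally symmetric, so $K_{2}$ cannot be dropped. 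With the correct inequality your quotient $f=|A|^{2}/q$ acquires the term $K_{2}/q$, and your hypothesis gives only $q>0$ pointwise, with no positive lower bound uniform in $t$: at the spatial maximum the Riccati inequality becomes $\frac{d}{dt}f_{\max }\leq -\delta _{0}f_{\max }^{2}+C_{2}f_{\max }+K_{2}/q(x_{t})$, whose inhomogeneity is uncontrolled, and any bound extracted from $\min_{\Sigma \times \lbrack 0,T]}q$ would depend on $T$ --- useless for Theorem \ref{Thm}, where the proposition is applied as $T\rightarrow \infty $. This is precisely what the paper's choice avoids: $\Omega _{t}^{6}\geq (1-\epsilon )^{6}$ is bounded below, at the price of needing the gradient estimate $|\nabla \Omega _{t}|^{2}\leq \frac{4}{3}(1-\Omega _{t}^{2})|A|^{2}$ (eq. (5.8) of \cite{w4}) to absorb the $p(p-1)\Omega _{t}^{p-2}|\nabla \Omega _{t}|^{2}$ term produced by the power $p=6$, with $\epsilon $ fixed by $(1-\epsilon )^{6}[2-4(1-22\epsilon )]\leq -1$.

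Your route is repairable with one observation already available in Section 2: since $c>0$ and $2|A|^{2}-|H|^{2}\geq \frac{2}{3}|A|^{2}\geq 0$, the right-hand side of (\ref{5}) is nonnegative wherever $0<\Omega _{t}\leq 1$, so the minimum principle (equivalently, the ODE comparison stated after (\ref{5})) shows $\min_{\Sigma _{t}}\Omega _{t}$ is nondecreasing; hence $q\geq q_{0}:=\min_{\Sigma _{0}}\Omega _{0}-(1-\epsilon )>0$ uniformly in $t$, the bad term is at most $K_{2}/q_{0}$, and your Riccati argument closes with constants independent of $T$ (now depending on the initial surface through $q_{0}$, which is harmless for the proposition and for Theorem \ref{Thm}). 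With that addition the rest of your computation checks out --- the quotient-rule identity, the coefficient $C_{1}q-\frac{2}{3}b\leq C_{1}\epsilon -\frac{2}{3}(1-\epsilon )<0$ for small $\epsilon $, $\Omega _{t}\leq 1$, and the vanishing of the cross-term at the spatial maximum --- and the linear denominator genuinely buys something, since it dispenses with Wang's gradient estimate entirely. Note finally that your use of the sign $c(1-\Omega _{t}^{2})\Omega _{t}\geq 0$ is not merely convenient but essential for your quotient: a sign-indefinite $c$ would likewise produce an uncontrolled $f/q$ term, whereas in the paper's version that term is harmless because its denominator is bounded below.
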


\begin{proof}
Recall in \cite{chw} that the second fundamental form $|A|^{2}$ satisfies
the following inequality 
\begin{equation}
\begin{array}{c}
(\frac{d}{dt}-\Delta )|A|^{2}\leq -2|\nabla
A|^{2}+2|A|^{4}+K_{1}|A|^{2}+K_{2}%
\end{array}
\label{11}
\end{equation}%
where $K_{1}$ and $K_{2}$ are constants that depend on the curvature tensor
and covariant derivatives of the curvature tensor of $M$. Since 
\begin{equation*}
\begin{array}{c}
|H|^{2}\leq \frac{4}{3}|A|^{2},%
\end{array}%
\end{equation*}%
and $Ric^{T}(\Phi e_{1},e_{2})=c\Omega _{t},$ the evolution equation (\ref{5}%
) becomes%
\begin{equation}
\begin{array}{l}
(\frac{d}{dt}-\Delta )\Omega _{t}\geq \frac{2}{3}|A|^{2}\Omega
_{t}+c(1-\Omega _{t}^{2})\Omega _{t}.%
\end{array}
\label{12}
\end{equation}%
Let $p>1$ be an integer to be determined, we calculate the equation for $%
\Omega _{t}^{p},$ where $\Omega _{t}=\ast \omega .$%
\begin{equation*}
\begin{array}{c}
\frac{d}{dt}\Omega _{t}^{p}=p\Omega _{t}^{p-1}\frac{d}{dt}\Omega _{t}\geq
p\Omega _{t}^{p-1}(\Delta \Omega _{t}+\frac{2}{3}|A|^{2}\Omega
_{t}+c(1-\Omega _{t}^{2})\Omega _{t})%
\end{array}%
\end{equation*}%
Using $\Delta \Omega _{t}^{p}=p\Omega _{t}^{p-1}\Delta \Omega
_{t}+p(p-1)\Omega _{t}^{p-2}|\nabla \Omega _{t}|^{2}$ and from the equation
(5.8) in \cite{w4} yields 
\begin{equation*}
|\nabla \Omega _{t}|^{2}\leq \frac{4}{3}(1-\Omega _{t}^{2})|A|^{2}.
\end{equation*}
The above inequality becomes%
\begin{equation}
\begin{array}{c}
(\frac{d}{dt}-\Delta )\Omega _{t}^{p}\geq \frac{2}{3}p[\Omega
_{t}^{2}-2(p-1)(1-\Omega _{t}^{2})]\Omega _{t}^{p-2}|A|^{2}+cp(1-\Omega
_{t}^{2})\Omega _{t}^{p}.%
\end{array}
\label{13}
\end{equation}

By composing the two inequalities (\ref{11}) and (\ref{13}), we get the
evolution equation for $\frac{|A|^{2}}{\Omega _{t}^{p}}:$ 
\begin{equation*}
\begin{array}{lll}
(\frac{d}{dt}-\Delta )\frac{|A|^{2}}{\Omega _{t}^{p}} & = & 2p\langle \nabla
\ln \Omega _{t},\nabla \frac{|A|^{2}}{\Omega _{t}^{p}}\rangle +\Omega
_{t}^{-2p}[\Omega _{t}^{p}(\frac{d}{dt}-\Delta )|A|^{2}-|A|^{2}(\frac{d}{dt}%
-\Delta )\Omega _{t}^{p}] \\ 
& \leq & 2p\langle \nabla \ln \Omega _{t},\nabla \frac{|A|^{2}}{\Omega
_{t}^{p}}\rangle +\Omega _{t}^{-p}[-2|\nabla
A|^{2}+2|A|^{4}+K_{1}|A|^{2}+K_{2}] \\ 
&  & -\Omega _{t}^{-(p+2)}[\frac{2}{3}p(\Omega _{t}^{2}-2(p-1)(1-\Omega
_{t}^{2}))|A|^{2}+cp(1-\Omega _{t}^{2})\Omega _{t}^{2}]|A|^{2}.%
\end{array}%
\end{equation*}%
Now let $p=6$, the last two terms in the above inequality are less than%
\begin{equation*}
\begin{array}{c}
\lbrack 2-4(1-22\epsilon )]\frac{|A|^{4}}{\Omega _{t}^{6}}%
+(K_{1}+12|c|\epsilon )\frac{|A|^{2}}{\Omega _{t}^{6}}+\Omega _{t}^{-6}K_{2}.%
\end{array}%
\end{equation*}%
Choose $\epsilon $ small enough so that $(1-\epsilon )^{6}[2-4(1-22\epsilon
)]\leq -1,$ thus%
\begin{equation*}
\begin{array}{c}
\lbrack 2-4(1-22\epsilon )]\frac{|A|^{4}}{\Omega _{t}^{6}}=[2-4(1-22\epsilon
)]\Omega _{t}^{6}\frac{|A|^{4}}{\Omega _{t}^{12}}\leq -\frac{|A|^{4}}{\Omega
_{t}^{12}}.%
\end{array}%
\end{equation*}%
Then $\psi =\frac{|A|^{2}}{\Omega _{t}^{6}}$ will satisfy the inequality%
\begin{equation*}
\begin{array}{c}
(\frac{d}{dt}-\Delta )\psi \leq C_{0}\langle \nabla \ln \Omega _{t},\nabla
\psi \rangle -\psi ^{2}+C_{1}\psi +C_{2}.%
\end{array}%
\end{equation*}%
By applying the maximum principle for parabolic equations and conclude that $%
\frac{|A|^{2}}{\Omega _{t}^{6}}$ is uniformly bounded, thus $|A|^{2}$ is
also uniformly bounded.
\end{proof}

\begin{theorem}
\label{Thm} Under the same assumption as in the Proposition \ref{prop}. When 
$M$ has non-negative transverse curvature, there exists a constant $%
1>\epsilon >0$ such that if $\Sigma $ is a compact oriented Legendrian
surface immersed in $M$ with $\ast \omega >1-\epsilon $ on $\Sigma $, the
Legendrian mean curvature flow of $\Sigma $ converges smoothly to a totally
geodesic Legendrian surface at infinity.
\end{theorem}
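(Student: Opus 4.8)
The plan is to promote the conditional statement of Proposition~\ref{prop} into an unconditional convergence result: first show the pinching $\Omega_t>1-\epsilon$ is preserved for all time, then extract decay of the second fundamental form from an area-monotonicity argument, and finally use the uniform higher-order bounds to pass to a totally geodesic limit. First I would check that the hypothesis of Proposition~\ref{prop} never fails. Since $M$ is Sasaki-Einstein we have $Ric^T(\Phi e_1,e_2)=c\,\Omega_t$ with $c>0$, so the comparison of~(\ref{5}) with the ODE $\frac{d}{dt}\varphi=c\varphi(1-\varphi^2)$ carried out in Section~2 shows $\min_{\Sigma_t}\Omega_t$ is non-decreasing and tends to $1$; in particular $\Omega_t>1-\epsilon$ on every $[0,T]$ and $\Omega_t\to 1$ uniformly. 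Proposition~\ref{prop} then bounds $|A|^2$ on each $[0,T]$, and since the maximum principle applied to $\psi=|A|^2/\Omega_t^6$ produces a time-independent bound, we get $\sup_{\Sigma_t\times[0,\infty)}|A|^2\le C$. By Theorem~\ref{Thm0} the flow is smooth for all time, and bootstrapping~(\ref{11}) gives uniform bounds on $|\nabla^k A|$ for every $k$.

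Next I would extract energy decay. Because $\overline{\omega}$ is parallel, hence closed, and $F_t$ is a smooth isotopy, Stokes' theorem makes $\int_{\Sigma}\omega_t=\int_{\Sigma}\Omega_t\,d\mu_t$ constant in $t$. On the other hand $\frac{d}{dt}\int_{\Sigma}d\mu_t=-\int_{\Sigma}|H|^2 d\mu_t$, so $\int_{\Sigma}(1-\Omega_t)\,d\mu_t=\mathrm{Area}(\Sigma_t)-\int_{\Sigma}\Omega_t d\mu_t$ is non-increasing, tends to $0$ as $\Omega_t\to 1$, and $\int_0^\infty\int_{\Sigma}|H|^2\,d\mu_t\,dt<\infty$. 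Integrating~(\ref{5}) against $d\mu_t$ and using $\frac{d}{dt}\int\Omega_t d\mu_t=0$ yields the identity $\int_{\Sigma}(2|A|^2-|H|^2)\Omega_t\,d\mu_t=\int_{\Sigma}\Omega_t|H|^2\,d\mu_t-c\int_{\Sigma}(1-\Omega_t^2)\Omega_t\,d\mu_t$. Combining this with $2|A|^2-|H|^2\ge\frac{2}{3}|A|^2$ and $\Omega_t>1-\epsilon$ gives $\int_{\Sigma}|A|^2 d\mu_t\le\frac{3}{2(1-\epsilon)}\int_{\Sigma}|H|^2 d\mu_t$, so $\int_0^\infty\int_{\Sigma}|A|^2 d\mu_t\,dt<\infty$ and hence $\liminf_{t\to\infty}\int_{\Sigma}|A|^2 d\mu_t=0$.

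It remains to upgrade this $L^2$-decay to a totally geodesic limit. Here the non-negative transverse curvature enters: it fixes the signs of the reaction constants $K_1,K_2$ in~(\ref{11}) and in the composed inequality for $\psi=|A|^2/\Omega_t^6$ from Proposition~\ref{prop}, so that once $\int_{\Sigma}|A|^2$ is small and $\Omega_t$ is close to $1$, a maximum-principle/Moser iteration forces $\sup_{\Sigma_t}|A|^2\to 0$ as $t\to\infty$. With the uniform $C^\infty$ bounds this lets me pass to a smooth subsequential limit $\Sigma_\infty$ with $\Omega_\infty\equiv 1$; substituting $\Omega_\infty\equiv 1$ into~(\ref{5}) gives $2|A|^2-|H|^2=0$, and since $2|A|^2-|H|^2\ge\frac{2}{3}|A|^2$ this forces $A\equiv 0$, i.e.\ $\Sigma_\infty$ is totally geodesic.

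I expect the main obstacle to be upgrading subsequential convergence into smooth convergence of the \emph{entire} flow to a single totally geodesic limit. The uniform vanishing $\sup_{\Sigma_t}|A|^2\to 0$ makes the velocity $|H-2\alpha\mathcal{T}|\to 0$ uniformly, but ruling out drifting among a family of distinct totally geodesic limits requires a uniqueness-of-limit mechanism. I would address this by a Lojasiewicz--Simon inequality near the (rigid) totally geodesic limit, or alternatively by a White-type argument combining the monotonicity formula~(\ref{2023}) with interpolation between the $L^2$-decay and the uniform $|\nabla^k A|$ bounds, to conclude that $F_t$ converges smoothly to a fixed totally geodesic Legendrian surface as $t\to\infty$.
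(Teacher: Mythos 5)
Your overall scheme tracks the paper's proof quite closely: the pinching $\Omega_t>1-\epsilon$ is preserved (the paper gets this from the maximum principle applied to (\ref{5}), as in your ODE comparison), Proposition \ref{prop} with time-independent constants gives $\sup_{\Sigma_t\times[0,\infty)}|A|^2\leq C$, one then shows spacetime $L^2$ decay of $A$, upgrades it to $\sup_{\Sigma_t}|A|^2\rightarrow 0$, and concludes uniqueness of the limit from the negative gradient flow structure and analyticity via Simon's theorem \cite{si} --- exactly the mechanism you propose at the end ({\L}ojasiewicz--Simon). One sub-step of yours is genuinely different and worth noting: you derive the spacetime $L^2$ bound from the conservation law $\frac{d}{dt}\int_{\Sigma}\omega_t=0$ (legitimate, since a parallel basic form is closed and $\frac{d}{dt}d\mu_t=-|H|^2d\mu_t$ holds because $\mathcal{T}\perp H$), in the style of Wang's two-dimensional argument, whereas the paper integrates the parabolic inequality (\ref{12}) directly: using $\Omega_t\geq\delta>0$ one gets $\frac{d}{dt}\int_{\Sigma_t}(1-\Omega_t)\,d\mu_t\leq-\frac{2}{3}\delta\int_{\Sigma_t}|A|^2d\mu_t$, which yields (\ref{22}) with no appeal to topology. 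Both routes are valid; yours buys a clean integral identity, the paper's avoids any discussion of the cohomology class of $\overline{\omega}$. You also omit the paper's oscillation estimate $|\alpha(x)-\alpha(y)|\leq\max_{\Sigma_t}|H|\,\mathrm{diam}(\Sigma_t)$ showing the Legendrian angle converges to a constant, but that is minor since it follows from $\sup_{\Sigma_t}|H|\rightarrow 0$ and the bounded geometry you already have.

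The genuine gap is your third paragraph, the passage from $L^2$ decay to uniform pointwise decay. First, you only establish $\liminf_{t\rightarrow\infty}\int_{\Sigma_t}|A|^2d\mu_t=0$, and smallness at a sequence of times does not propagate by itself. The paper closes this by (\ref{21}): integrating (\ref{11}) with the uniform bound on $|A|^2$ gives $\frac{d}{dt}\int_{\Sigma_t}|A|^2d\mu_t\leq C$, and a nonnegative function with bounded derivative and finite time integral must tend to zero; you have both ingredients but never assemble them. Second, your proposed mechanism for the $L^2\rightarrow L^\infty$ upgrade is incorrect as stated: non-negative transverse curvature does not ``fix the signs'' of $K_1,K_2$ in (\ref{11}) --- those constants measure the size of the ambient curvature tensor and its covariant derivatives, and in particular the inhomogeneous term $K_2>0$ means no maximum principle or Moser iteration applied to (\ref{11}) alone can force $\sup_{\Sigma_t}|A|^2\rightarrow 0$ from $L^2$ smallness. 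What the paper actually uses is Ilmanen's small-$\epsilon$ regularity theorem \cite{il}, applied once the full limit $\int_{\Sigma_t}|A|^2d\mu_t\rightarrow 0$ is known. Replace your third paragraph by these two steps and the remainder of your argument (smooth subsequential limit with $A\equiv 0$, hence totally geodesic, then uniqueness via {\L}ojasiewicz--Simon) coincides with the paper's proof.
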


\begin{proof}
By integrating the inequality (\ref{11}) and use the uniformly bounded of
the second fundamental form $|A|^{2}$, we have 
\begin{equation}
\begin{array}{c}
\frac{d}{dt}\int_{\Sigma _{t}}|A|^{2}d\mu _{t}\leq C.%
\end{array}
\label{21}
\end{equation}%
Recall in this case $(\frac{d}{dt}-\Delta )\Omega _{t}\geq \frac{2}{3}%
|A|^{2}\Omega _{t}$ and $\Omega _{t}$ has a positive lower bound, thus%
\begin{equation}
\begin{array}{c}
\int_{0}^{\infty }\int_{\Sigma _{t}}|A|^{2}d\mu _{t}dt\leq C.%
\end{array}
\label{22}
\end{equation}%
These two equation (\ref{21}) and (\ref{22}) together implies 
\begin{equation*}
\begin{array}{c}
\int_{\Sigma _{t}}|A|^{2}d\mu _{t}\rightarrow 0.%
\end{array}%
\end{equation*}%
By using the small $\epsilon $ regularity theorem in \cite{il}, $%
\sup_{\Sigma _{t}}|A|^{2}\rightarrow 0$ uniformly as $t\rightarrow \infty $.

Now we want to bound the Legendrian angle $\alpha $ on $\Sigma _{t}.$ Since $%
|A|^{2}$ is uniformly bounded for all time $t\in \lbrack 0,\infty )$, then
the induced Riemannian metrics $g_{ij}(t)$ on $\Sigma _{t}$ are all
uniformly equivalent to $g_{ij}(0)$, thus the diameter of $g_{ij}(t)$ are
all uniformly bounded. For any points $x,$ $y\in \Sigma _{t},$ we have 
\begin{equation*}
\left\vert \alpha (x)-\alpha (y)\right\vert \leq \max_{\Sigma
_{t}}\left\vert \nabla \alpha \right\vert \mathrm{dist}_{g_{ij}(t)}(x,y)\leq
\max_{\Sigma _{t}}\left\vert H\right\vert \mathrm{diam}(\Sigma _{t})\leq C
\end{equation*}%
uniformly for all time $t\in \lbrack 0,\infty )$. Moreover 
\begin{equation*}
\sup_{\Sigma _{t}}|H|^{2}\rightarrow 0
\end{equation*}
uniformly as $t\rightarrow \infty ,$ we see that $\alpha $ will converge to
a constant as $t\rightarrow \infty .$

Since the Legendrian mean curvature flow is a negative gradient flow (\cite%
{k}, \cite{o}) and the metrics are analytic, by the theorem of Simon \cite%
{si}, we get convergence at infinity.
\end{proof}

When $M=\mathbb{S}^{2}\times \mathbb{S}^{3}$ with the parallel transverse K%
\"{a}hler form $\overline{\omega }=d\eta _{0}-d\eta _{2},$ the combination
of Theorem \ref{Thm0} and Theorem \ref{Thm} yields the Main Theorem \ref{T11}%
.

\end{document}